\documentclass[11pt,a4paper]{amsart}
\usepackage[T1]{fontenc}
\usepackage{lmodern,amssymb,mathtools,booktabs}
\usepackage[expansion=false]{microtype}
\usepackage[margin=29mm]{geometry}
\usepackage[colorlinks=true,linkcolor=blue,citecolor=blue,urlcolor=blue]{hyperref}
\hypersetup{pdftitle={A probabilistic construction of maximum h-scattered subspaces},pdfauthor={Alessandro Giannoni and Alessandro Neri}}
\newtheorem{theorem}{Theorem}[section]
\newtheorem{proposition}[theorem]{Proposition}
\newtheorem{lemma}[theorem]{Lemma}
\newtheorem{corollary}[theorem]{Corollary}
\theoremstyle{definition}
\newtheorem{remark}[theorem]{Remark}
\newtheorem{example}[theorem]{Example}
\numberwithin{equation}{section}
\newcommand{\F}{\mathbb F}
\newcommand{\rk}{\operatorname{rk}}
\newcommand{\GL}{\operatorname{GL}}
\newcommand{\Mat}{\operatorname{Mat}}
\newcommand{\PG}{\operatorname{PG}}
\newcommand{\codim}{\operatorname{codim}}
\newcommand{\tr}{\operatorname{tr}}
\newcommand{\Tr}{\operatorname{Tr}}
\newcommand{\Gal}{\operatorname{Gal}}
\newcommand{\lcm}{\operatorname{lcm}}
\newcommand{\Frob}{\operatorname{Frob}}
\newcommand{\Stab}{\operatorname{Stab}}

\newcommand{\Prob}{\mathbb P}
\newcommand{\gauss}[2]{\genfrac{[}{]}{0pt}{}{#1}{#2}_{q}}
\newcommand{\gaussf}[3]{\genfrac{[}{]}{0pt}{}{#1}{#2}_{#3}}

\title{A probabilistic construction of maximum $h$-scattered subspaces}

\author{Alessandro Giannoni}
\address{Dipartimento di Matematica e Applicazioni ``Renato Caccioppoli'', Universit\`a degli Studi di Napoli Federico II, Via Cintia, Monte S.~Angelo, 80126 Napoli, Italy}
\email{[alessandro.giannoni@unina.it]}
\author{Alessandro Neri}
\address{Dipartimento di Matematica e Applicazioni ``Renato Caccioppoli'', Universit\`a degli Studi di Napoli Federico II, Via Cintia, Monte S.~Angelo, 80126 Napoli, Italy}
\email{alessandro.neri@unina.it}
\date{}
\subjclass[2020]{51E20, 94B05, 05D40, 11T71}
\keywords{$h$-scattered subspace, rank-metric code, MRD code, probabilistic construction, finite \'etale algebra}

\begin{document}

\maketitle

{
\begin{abstract}
Let $1\le h\le n-2$ and $g=\gcd(n,h+1)$. We prove that, for every prime power $q$, a maximum $h$-scattered $\mathbb{F}_q$-subspace of $V(r,q^n)$ exists whenever $(h+1)\mid rn$ and
$$
r\ge\frac{h+1}{g}\bigl(h(n-h-2)+1\bigr).
$$
In particular, for fixed $n$ and minimum distance $\delta\in\{2,\ldots,n-1\}$, $\mathbb{F}_{q^n}$-linear MRD codes of length $N$ exist for every $q$ and every sufficiently large $N$ such that $n$ divides $(n-\delta+1)N$, with a bound on $N$ depending only on $n$ and $\delta$. We take the $\mathbb{F}_{q^s}$-span of the columns of a random matrix $A$ over the algebra $R=\mathbb{F}_{q^n}\otimes_{\mathbb{F}_q}\mathbb{F}_{q^s}$, for a suitable $s$. Such a subspace is maximum $h$-scattered if and only if the kernel of $A$ contains no nonzero vector whose expansion with respect to an $\mathbb{F}_q$-basis of $\mathbb{F}_{q^s}$ has rank weight at most $h+1$. When $\gcd(n,s)=1$, $R$ is a field, and a first-moment argument, counting these vectors up to multiplication by $\mathbb{F}_{q^n}^*\mathbb{F}_{q^s}^*$, shows that with positive probability the kernel contains none of them if $s>h(n-h-2)$. When $d=\gcd(n,s)>1$, $R$ is a product of $d$ fields, and vectors vanishing in some factors lie in the kernel with larger probability. We control them by proving that, for an $(h+1)$-dimensional $\mathbb{F}_q$-subspace $D$ of $\mathbb{F}_{q^n}$, the elements of $D\otimes_{\mathbb{F}_q}\mathbb{F}_{q^s}$ vanishing outside $k$ given factors, $0<k<d$, form an $\mathbb{F}_{q^s}$-space of dimension at most $(h+1)k/d$, with equality only if $D$ is linear over a subfield of $\mathbb{F}_{q^n}$ larger than $\mathbb{F}_q$. Taking $s=gr/(h+1)$ gives the bound on $r$. Finally, if $(n-h-1)\mid(h+1)$, then for every $r$ satisfying the above inequality there is an $h$-scattered $\mathbb{F}_q$-subspace of $V(r,q^n)$ of dimension $\lfloor rn/(h+1)\rfloor$, without a divisibility condition on $rn$.
\end{abstract}}

\section{Introduction}

Let $V=V(r,q^n)$ be an $r$-dimensional vector space over $\F_{q^n}$. An $\F_q$-subspace $U$ of $V$ is called \emph{$h$-scattered} if $\langle U\rangle_{\F_{q^n}}=V$ and
\[
\dim_{\F_q}(U\cap W)\le h
\]
for every $h$-dimensional $\F_{q^n}$-subspace $W$ of $V$. Throughout this paper, we assume
\[
1\le h\le n-2,\qquad r\ge h+1.
\]
In this range, it follows from a theorem of Csajb\'ok, Marino, Polverino and Zullo \cite[Theorem~2.3]{CMPZ} that
\[
\dim_{\F_q}U\le\frac{rn}{h+1}.
\]
We call $U$ \emph{maximum} when equality holds, and we call $r$ \emph{admissible} when $(h+1)\mid rn$. When $(h+1)\nmid rn$, we ask instead whether $U$ can attain the integer upper bound $\lfloor rn/(h+1)\rfloor$.

Maximum $h$-scattered subspaces correspond to linear MRD codes \cite{ZZ,MNT,NZ}. If an $\F_q$-basis of $U$ is used as the columns of a matrix over $\F_{q^n}$, its row space is an $\F_{q^n}$-linear MRD code with parameters
\[
[N,r,n-h]_{q^n/q},\qquad N=\frac{rn}{h+1}.
\]
Conversely, by Proposition~\ref{prop:criterion} and the duality of MRD codes, every $\F_{q^n}$-linear MRD code with these parameters arises in this way. Thus maximum $h$-scattered subspaces of $V(r,q^n)$ exist if and only if $\F_{q^n}$-linear MRD codes with parameters $[rn/(h+1),r,n-h]_{q^n/q}$ exist.

Related existence and enumeration problems have also been studied through rank-metric lattices. Cotardo and Ravagnani \cite[Theorem~23 and Section~5]{CR} introduced these lattices and related their Whitney numbers to code counts, and their critical exponents to the largest dimensions attainable for a prescribed minimum rank distance. Cotardo, Ravagnani and Zullo \cite{CRZ} used hyperovals and linear sets to obtain further Whitney number formulas and density estimates. In this language, the condition on $\ker G$ in Proposition~\ref{prop:criterion} below says that $\ker G$ contains no atom of the rank-metric lattice of $\F_{q^n}^N$ generated by the vectors of rank at most $h+1$.

When $(h+1)\mid r$, direct sums of the standard construction in $V(h+1,q^n)$ give maximum $h$-scattered subspaces \cite[Theorem~2.6]{CMPZ}; further families in these dimensions appear in \cite{BGM}. If $\gcd(n,h+1)=1$, then $(h+1)\mid rn$ forces $(h+1)\mid r$, so the existence problem is settled in this case. The case $h=1$ is also known for all admissible parameters \cite{BL,BBL,BGMP,CMPZ17}, and Delsarte duality gives the corresponding result for $h=n-3$ \cite[Corollary~3.5]{CMPZ}. The case $(n,h)=(6,2)$ is settled for every $q$ and every $r\ge3$ in \cite{BGGMN}; earlier, \cite{BGGM} gave a construction for $r=4$ and $q$ an odd power of $2$, which settled the case $r\ge3$, $r\ne5$, for these $q$.

For $r\ge5$ coprime to $6$, the construction in \cite{BGGMN} uses a two-dimensional $\F_{q^r}$-subspace of $\F_{q^{6r}}$. In this paper we use such an auxiliary field for general parameters. Let $s$ be coprime to $n$, and let $\ell,t$ satisfy $n\ell=(h+1)t$. Then $\F_{q^{ns}}^\ell$ is an $s\ell$-dimensional $\F_{q^n}$-space, and an $\F_q$-basis of $\F_{q^s}$ is an $\F_{q^n}$-basis of $\F_{q^{ns}}$. We take the $\F_{q^s}$-span of the columns of a random $\ell\times t$ matrix over $\F_{q^{ns}}$; it is an $\F_q$-subspace of $V(s\ell,q^n)$. It fails to be maximum $h$-scattered if and only if the matrix over $\F_{q^n}$ obtained by expanding the columns in an $\F_q$-basis of $\F_{q^s}$ has a nonzero kernel vector of rank at most $h+1$.

The main counting step groups these relations under multiplication by $\F_{q^n}^*\F_{q^s}^*$. This reduces the first-moment bound to
\[
\gauss{n}{h+1}\frac{q-1}{(q^n-1)(q^s-1)},
\]
which is at most one precisely when $s>h(n-h-2)$. To use degrees not coprime to $n$, we replace the compositum by
\[
\F_{q^n}\otimes_{\F_q}\F_{q^s}
\cong\prod_{i=1}^{\gcd(n,s)}\F_{q^{ns/\gcd(n,s)}}.
\]
A vector supported on only some of these factors lies in the kernel of a random matrix with larger probability. We bound the number of such vectors by an inequality between dimensions and supports (Theorem~\ref{thm:etale}), and we determine its equality case (Theorem~\ref{thm:etale-equality}). This leads to the following result.

\begin{theorem}\label{thm:main}
Let $1\le h\le n-2$ and $g=\gcd(n,h+1)$. For every prime power $q$, a maximum $h$-scattered $\F_q$-subspace of $V(r,q^n)$ exists whenever
\[
(h+1)\mid rn,\qquad
r\ge\frac{h+1}{g}\bigl(h(n-h-2)+1\bigr).
\]
\end{theorem}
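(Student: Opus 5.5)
We plan to follow the probabilistic construction sketched in the introduction. Put $s=gr/(h+1)$. Since $(h+1)\mid rn$ and $g=\gcd(n,h+1)$, we have $\frac{h+1}{g}\mid r$, so $s$ is a positive integer. The hypothesis on $r$ becomes $s\ge h(n-h-2)+1$, that is, $s>h(n-h-2)$.

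Set $\ell=r/s=(h+1)/g$ and $t=ns/(h+1)\cdot\ell=rn/(h+1)$, so that $n\ell=(h+1)t/s\cdot\ldots$; more cleanly, we need an $\F_q$-subspace of dimension $N=rn/(h+1)$ in $V(r,q^n)\cong R^{\ell}$ with $R=\F_{q^n}\otimes_{\F_q}\F_{q^s}$, where $\dim_{\F_{q^n}}R^\ell=s\ell=r$. We take a uniformly random $\ell\times t$ matrix $A$ over $R$ with $t=n\ell/(h+1)=n/g$. The $\F_{q^s}$-span of its columns has $\F_q$-dimension at most $st=sn/g=rn/(h+1)$. By Proposition~\ref{prop:criterion}, this span is maximum $h$-scattered exactly when the $r\times N$ expanded matrix over $\F_{q^n}$ has no nonzero kernel vector of rank weight at most $h+1$. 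Such a vector corresponds to a nonzero $x\in D\otimes\F_{q^s}\subseteq R^{t}$ (after identifying $\F_{q^n}^{N}$ with $R^t$) with $Ax=0$, where $D$ is an $(h+1)$-dimensional $\F_q$-subspace of $\F_{q^n}$ containing the coordinates. The spanning condition and the exact dimension are automatic once no such vector exists, since a rank-one kernel vector is already excluded.

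The first-moment estimate goes as follows. Write $R\cong\prod_{i=1}^{d}K_i$ with $d=\gcd(n,s)$ and $K_i\cong\F_{q^{ns/d}}$. For $x$ whose support set is $S\subseteq\{1,\dots,d\}$ with $|S|=k$, we have $\Prob(Ax=0)=q^{-\ell k ns/d}$, because each row of $A$ is uniform and its projection to $K_i$ pairs with $x_i\ne0$ to give a uniform value. We bound the number of relevant $x$ as follows. For each $D$ (there are $\gauss{n}{h+1}$ of them) and each $S$, the elements of $D\otimes\F_{q^s}$ supported in $S$ form an $\F_{q^s}$-space of dimension at most $(h+1)k/d$ by Theorem~\ref{thm:etale}. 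Equality holds only for $D$ linear over a larger subfield, by Theorem~\ref{thm:etale-equality}. Moreover, the events are invariant under multiplication by units of $R$ supported on $S$, and in particular under $\F_{q^n}^*\F_{q^s}^*$, so we count orbits. For $k=d$ this reproduces the quantity $\gauss{n}{h+1}\frac{q-1}{(q^n-1)(q^s-1)}$ times the probability factor, and with $\ell(h+1)=g\cdot$ the correct normalisation this is $<1$ precisely when $s>h(n-h-2)$. For $k<d$, the count is at most $q^{s(h+1)k/d}$ per $D$ and $S$, while the probability is $q^{-\ell kns/d}=q^{-(h+1)k s t/d\cdot(\ell/t)\ldots}$. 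In all cases we must compare $s(h+1)k/d$ against $\ell k n s/d$. Since $\ell n\ge h+1$, with equality iff $g=\ldots$, each term is at most the exponent, and strict inequality follows from the equality characterisation.

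The main obstacle is the case $k<d$: the union bound must remain below $1$ after summing over all $D$, all $S$ (there are $\binom dk$ choices), and all $k$. Here we plan to use the rigidity in Theorem~\ref{thm:etale-equality}. When $D$ is not linear over a proper overfield of $\F_q$, the dimension drops by at least one, which gains a factor $q^{-s}$ that absorbs $\gauss{n}{h+1}\binom dk\approx q^{(h+1)(n-h-1)}2^d$, using $s>h(n-h-2)$. When $D$ is $\F_{q^e}$-linear, the number of such $D$ is only about $q^{e\cdot(h+1)/e\cdot(n-h-1)/e}$, which is much smaller, and a direct count of orbits under $\F_{q^{n}}^*$ should suffice. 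Combining the two estimates, the total expected number of bad orbits is $<1$, so some $A$ works, which proves Theorem~\ref{thm:main}.
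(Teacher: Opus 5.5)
Your construction is the paper's: the same $s=gr/(h+1)$, $\ell=(h+1)/g$, $t=n/g$, and the same union bound over supports using Theorems~\ref{thm:etale} and~\ref{thm:etale-equality}. There is, however, a genuine gap. Your claim that the first-moment bound is below $1$ for every admissible $r$ is false, and the case distinction that rescues the theorem is missing.

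Take $(n,h,r,q)=(4,1,2,2)$, which satisfies the hypotheses. Then $s=d=2$, and the full-support term is $\beta_{4,1,2}(2)(1-2^{-8})=\frac79\cdot\frac{255}{256}$. The vectors of rank at most $2$ supported on a single factor of $R\cong\F_{16}\times\F_{16}$ come from the five $\F_4$-linear $D$, for which $\kappa_T(D)=1$. They fill five points of $\PG(1,16)$ in each factor, and each point lies in $\ker A$ with probability $1/16$. This adds exactly $5/8$, so the total exceeds $1$.

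The paper never uses the random construction when $g\mid s$: then $(h+1)\mid r$, and direct sums of standard blocks suffice. It treats $g=1$ the same way and takes $h=1$ and $h=n-3$ from known constructions. It applies Lemma~\ref{lem:uniform-etale} only for $2\le h\le n-4$ and $g\nmid s$, and both hypotheses are used: $g\nmid s$ gives $d<n$, hence $d\le n/2$, and $h\le n-4$ gives $s\ge 2n-7$.

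Your exponent count also misplaces the difficulty. The elements of $(D\otimes\F_{q^s})^t$ supported in $T$ form an $\F_{q^s}$-space of dimension $t\kappa_T(D)$, not $\kappa_T(D)$. Since $n\ell=(h+1)t$ exactly, the count $q^{N\kappa_T(D)}$ and the probability $q^{-N(h+1)|T|/d}$ cancel in the equality case. There is no slack from ``$\ell n\ge h+1$''.

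So each pair $(D,T)$ with $D$ linear over $\F_{q^{d_T}}$ contributes about $1$, and the only saving is the orbit factor $(q^d-1)/((q^n-1)(q^s-1))$. This needs orbits under all of $\F_{q^n}^*\F_{q^s}^*$, with stabilizers of order at most $(q^d-1)/(q-1)$. Orbits under $\F_{q^n}^*$ alone are not enough: for $(n,h)=(12,5)$ and $s=26$ there are about $q^{18}$ such $D$ with $d_T=2$. The resulting contribution is of order $(2^{d'}-2)q^{d-n-s+(h+1)(n-h-1)/d'}$, which is small only because $d\le n/2$.

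Finally, you must show $\beta_{n,h,s}(q)(1-q^{-(h+1)N})+\Theta<1$, not that each part is separately below $1$. Since $\beta$ alone can exceed $3/4$ (for example $\beta_{8,3,10}(2)\approx0.77$), this requires the paper's explicit estimates: monotonicity in $s$ and $q$ to reduce to $q=2$, the bound for $n\ge11$, and the table for $n\le10$.
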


The new argument is needed only when $g>1$ and $2\le h\le n-4$; in the remaining cases Theorem~\ref{thm:main} follows from the constructions recalled above. For example, Theorem~\ref{thm:main} gives maximum $3$-scattered subspaces of $V(r,q^8)$ for every $q$ and every $r\ge10$, in addition to the standard dimensions $r=4,8$. Using only the coprime degrees gives the threshold $r\ge19$.

The coding-theoretic formulation appears in Corollary~\ref{cor:mrd}. It proves an asymptotic form of the long-length case of the linear MRD conjecture (see, e.g., \cite[Conjecture~4.2(2)]{NZ}): for fixed extension degree and minimum distance, all sufficiently large admissible lengths occur, with a threshold independent of $q$.

Adding coordinates over $\F_q$ to the subspaces of Theorem~\ref{thm:main}, we obtain the following result, in which $(h+1)\mid rn$ is not required.

\begin{theorem}\label{thm:mainrounded}
Suppose $(n-h-1)\mid(h+1)$. For every prime power $q$ and every
\[
r\ge\frac{h+1}{n-h-1}\bigl(h(n-h-2)+1\bigr),
\]
there exists an $h$-scattered $\F_q$-subspace of $V(r,q^n)$ of dimension $\lfloor rn/(h+1)\rfloor$.
\end{theorem}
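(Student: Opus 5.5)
The plan is to reduce Theorem~\ref{thm:mainrounded} to Theorem~\ref{thm:main} through an arithmetic observation, and then to extend a maximum $h$-scattered subspace by adding one $\F_q$-dimension for each extra coordinate.

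\emph{Arithmetic.} Write $m=n-h-1$ and $h+1=mc$. Then $n=m(c+1)$, so $g=\gcd(n,h+1)=m\gcd(c+1,c)=m$. Hence the bound in Theorem~\ref{thm:mainrounded} coincides with that of Theorem~\ref{thm:main}, and it equals $c\bigl(h(n-h-2)+1\bigr)$, which is a multiple of $c$. Moreover $(h+1)\mid rn$ if and only if $mc\mid rm(c+1)$, that is, if and only if $c\mid r$. Given $r$ above the bound, put $r_0=c\lfloor r/c\rfloor$ and $e=r-r_0\in\{0,\dots,c-1\}$. Because the bound is a multiple of $c$, we still have $r_0\ge c\bigl(h(n-h-2)+1\bigr)$, and $r_0$ is admissible. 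Theorem~\ref{thm:main} therefore supplies a maximum $h$-scattered subspace $U_0\le V(r_0,q^n)$ with $\dim_{\F_q}U_0=r_0(c+1)/c$. Finally,
\[
\Bigl\lfloor\frac{rn}{h+1}\Bigr\rfloor=\Bigl\lfloor\frac{r(c+1)}{c}\Bigr\rfloor=r+\frac{r_0}{c}=\frac{r_0 n}{h+1}+e,
\]
so it suffices to add exactly $e$ dimensions.

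\emph{Extension lemma.} I will show the following: if $U$ is $h$-scattered in $V_1$ with $\dim_{\F_{q^n}}V_1\ge h$, then $U'=U\oplus\langle w\rangle_{\F_q}$ is $h$-scattered in $V_1\oplus\F_{q^n}w$. Applying this $e$ times, starting from $U_0$, gives the theorem. Spanning is clear. Let $W$ be an $h$-dimensional $\F_{q^n}$-subspace, and let $\pi$ be the projection onto the $w$-coordinate. The kernel of $\pi$ restricted to $U'\cap W$ is $U\cap W$, and $\pi(U'\cap W)\subseteq\F_q$.

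If $\pi(U'\cap W)=0$, then $U'\cap W=U\cap W$. Since $W\cap V_1$ has $\F_{q^n}$-dimension at most $h$, the scattered property (extending $W\cap V_1$ to an $h$-dimensional subspace of $V_1$ if necessary) gives $\dim_{\F_q}(U\cap W)\le h$.

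Otherwise $W$ contains a vector with nonzero $w$-coordinate, so $W'=W\cap V_1$ has dimension $h-1$, and $\dim_{\F_q}(U'\cap W)=\dim_{\F_q}(U\cap W')+1$. It then suffices to prove that $\dim_{\F_q}(U\cap W')\le h-1$.

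\emph{Key step.} The main point, which is short, is this bound. Suppose instead that $\dim_{\F_q}(U\cap W')\ge h$. Since $U$ spans $V_1$ and $\dim V_1\ge h>h-1$, there is some $u\in U\setminus W'$. Then $W'+\F_{q^n}u$ is $h$-dimensional and contains $(U\cap W')\oplus\langle u\rangle_{\F_q}$, whose $\F_q$-dimension is at least $h+1$. This contradicts the $h$-scattered property of $U$. For the iterated applications the hypothesis $\dim V_1\ge h$ always holds, since $\dim V_1\ge r_0\ge h+1$.
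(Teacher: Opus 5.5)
Your proof is correct and follows the paper's route. The hypothesis forces $g=n-h-1$; you take the largest admissible dimension $r_0\le r$, which stays above the threshold because the threshold is a multiple of $(h+1)/g$, apply Theorem~\ref{thm:main}, and append $r-r_0$ free $\F_q$-coordinates. The only difference is the free-coordinate step: you check it directly from the intersection definition of $h$-scattered, whereas the paper (Proposition~\ref{prop:free}) deduces it from the kernel criterion of Lemma~\ref{lem:generalcriterion} by appending an identity block. Both arguments are valid.
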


More generally, this procedure gives explicit residue classes of $r$ in which the integer upper bound is attained (Corollary~\ref{cor:residue-classes}). For $h=n-2$, and for $h=n-3$ with $n$ even, the known maximum constructions together with this procedure attain the integer upper bound for every $r\ge h+1$. For $h=n-3$ with $n\ge7$ odd, the auxiliary field gives further residue classes (Corollary~\ref{cor:oddm2}).

Section~\ref{sec:relations} records the relation criterion and the direct-sum construction. Section~\ref{sec:first} gives the coprime-field count. Section~\ref{sec:etale} proves the support inequality, its equality case and Theorem~\ref{thm:main}. Section~\ref{sec:rounded} treats the integer upper bound and proves Theorem~\ref{thm:mainrounded}.

\section{Low-rank relations and scattered subspaces}\label{sec:relations}

For $x=(x_1,\ldots,x_N)\in\F_{q^n}^N$, its rank weight is
\[
\rk_q(x)=\dim_{\F_q}\langle x_1,\ldots,x_N\rangle_{\F_q}.
\]
An $\F_{q^n}$-linear rank-metric code of length $N$ and dimension $k$ is a $k$-dimensional $\F_{q^n}$-subspace of $\F_{q^n}^N$. Its minimum distance is the least rank weight of a nonzero codeword. We use the notation $[N,k,\delta]_{q^n/q}$.

After choosing an $\F_q$-basis of $\F_{q^n}$, such a code becomes an $\F_q$-linear space of $n\times N$ matrices. For $N\ge n$, the Singleton bound is
\[
nk\le N(n-\delta+1).
\]
Indeed, deleting $\delta-1$ rows is injective on the matrix code. Codes attaining this bound are called MRD. By \cite{Delsarte}, the dual of an MRD code with respect to the pairing $(X,Y)\mapsto\tr(XY^{\mathsf T})$ is MRD. If the coordinates of the two codes are expanded in an $\F_q$-basis of $\F_{q^n}$ and in its trace-dual basis, respectively, this pairing becomes $(x,y)\mapsto\Tr_{q^n/q}(\sum_i x_iy_i)$. If $C$ is $\F_{q^n}$-linear, then $\Tr_{q^n/q}(\sum_i x_iy_i)=0$ for all $x\in C$ if and only if $\sum_i x_iy_i=0$ for all $x\in C$, since $C$ is closed under $\F_{q^n}$-scalars and the trace form is nondegenerate. Hence the ordinary dual of an $\F_{q^n}$-linear MRD code is MRD.

\begin{lemma}\label{lem:independence}
Let $U\le_{\F_q}\F_{q^n}^r$ have $\F_{q^n}$-span $\F_{q^n}^r$, and let $1\le h<r$. Then $U$ is $h$-scattered if and only if every $h+1$ vectors of $U$ that are independent over $\F_q$ are independent over $\F_{q^n}$.
\end{lemma}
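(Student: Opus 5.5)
Both directions come from a dimension count inside an $h$-dimensional $\F_{q^n}$-subspace. The reverse direction is proved by contraposition, and the forward direction by building a suitable $W$ from a dependent family. Throughout we use that $\dim_{\F_q}(U\cap W)\le h$ is required for every $h$-dimensional $\F_{q^n}$-subspace $W$ of $V=\F_{q^n}^r$. The spanning hypothesis is part of the definition on both sides and needs no further attention.

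($\Rightarrow$) Assume $U$ is $h$-scattered, and let $u_1,\dots,u_{h+1}\in U$ be $\F_q$-independent. Suppose, for contradiction, that they are $\F_{q^n}$-dependent, and set $W_0=\langle u_1,\dots,u_{h+1}\rangle_{\F_{q^n}}$, so that $\dim_{\F_{q^n}}W_0\le h$. Since $h<r$, we can enlarge $W_0$ to an $\F_{q^n}$-subspace $W$ of dimension exactly $h$. Then $U\cap W$ contains the $\F_q$-span of the $u_i$, so $\dim_{\F_q}(U\cap W)\ge h+1$, contradicting the scattered condition.

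($\Leftarrow$) Assume some $h$-dimensional $\F_{q^n}$-subspace $W$ satisfies $\dim_{\F_q}(U\cap W)\ge h+1$. Choose $h+1$ vectors of $U\cap W$ that are $\F_q$-independent. They lie in $W$, which has $\F_{q^n}$-dimension $h$, so they are $\F_{q^n}$-dependent. Thus the independence condition fails, which proves this direction.

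There is no real obstacle here. The only point needing care is the extension of $W_0$ to dimension exactly $h$, and this is exactly where the hypothesis $h<r$ is used.
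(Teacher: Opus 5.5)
Your proof is correct and follows the same route as the paper. In one direction, $h+1$ vectors that are $\F_q$-independent but $\F_{q^n}$-dependent lie in an $h$-dimensional $\F_{q^n}$-subspace; in the other, an intersection of $\F_q$-dimension at least $h+1$ supplies such vectors. You additionally spell out the extension of their span to dimension exactly $h$ using $h<r$, which the paper leaves implicit.
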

\begin{proof}
If $h+1$ independent vectors over $\F_q$ are dependent over $\F_{q^n}$, their $\F_{q^n}$-span is contained in an $h$-dimensional $\F_{q^n}$-space. Conversely, an intersection of $\F_q$-dimension at least $h+1$ supplies such a set of vectors.
\end{proof}

\begin{lemma}\label{lem:generalcriterion}
Let $r\ge h+1$, let $N\ge1$, and let $G\in\Mat_{r\times N}(\F_{q^n})$. The $\F_q$-span $U$ of its columns is $h$-scattered of dimension $N$ if and only if $G$ has row rank $r$ and its kernel contains no nonzero vector of rank weight at most $h+1$.
\end{lemma}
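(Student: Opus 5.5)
The plan is to reduce the statement to Lemma~\ref{lem:independence} by translating $\F_q$-dependencies among columns into kernel vectors of $G$. Write $g_1,\ldots,g_N\in\F_{q^n}^r$ for the columns of $G$. Note first that $\langle U\rangle_{\F_{q^n}}$ is the column space of $G$, so the spanning condition in the definition of $h$-scattered holds exactly when $G$ has row rank $r$. It therefore suffices to assume that $G$ has row rank $r$ and prove the following: $\dim_{\F_q}U=N$ and $U$ is $h$-scattered if and only if $\ker G$ contains no nonzero $x$ with $\rk_q(x)\le h+1$.

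\emph{Kernel vectors give dependencies.} Suppose $0\ne x\in\ker G$ with $\rk_q(x)=m\le h+1$. Choose an $\F_q$-basis $\beta_1,\ldots,\beta_m$ of $\langle x_1,\ldots,x_N\rangle_{\F_q}$ and write $x=\sum_{j=1}^m\beta_j c_j$ with $c_j\in\F_q^N$. The vectors $c_j$ are linearly independent over $\F_q$, because the map $x\mapsto(c_1,\ldots,c_m)$ is the expansion in a basis. Put $u_j=Gc_j\in U$. Then
\[
0=Gx=\sum_j\beta_j u_j .
\]
There are two cases.
\begin{itemize}
\item If the $u_j$ are $\F_q$-dependent, then some nonzero $c\in\langle c_1,\ldots,c_m\rangle_{\F_q}$ satisfies $Gc=0$. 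Hence the columns of $G$ are $\F_q$-dependent and $\dim_{\F_q}U<N$.
\item If the $u_j$ are $\F_q$-independent, then they are $\F_{q^n}$-dependent, since the $\beta_j$ are not all zero. Because $m\le h+1\le r$, we can extend them by further elements of $U$ to $h+1$ vectors that are $\F_q$-independent but $\F_{q^n}$-dependent.
\end{itemize}
The extension in the second case needs $\dim_{\F_q}U\ge h+1$. This holds because $U$ spans $\F_{q^n}^r$ and $r\ge h+1$. Then Lemma~\ref{lem:independence} shows that $U$ is not $h$-scattered.

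\emph{Dependencies give kernel vectors.} If the columns are $\F_q$-dependent, then a nonzero $c\in\F_q^N\cap\ker G$ exists, and it has rank weight $1$. Otherwise, suppose $U$ has dimension $N$ but is not $h$-scattered. By Lemma~\ref{lem:independence} there are $u_1,\ldots,u_{h+1}\in U$, independent over $\F_q$, and $\lambda_j\in\F_{q^n}$ not all zero with $\sum_j\lambda_j u_j=0$. Write $u_j=Gc_j$ with $c_j\in\F_q^N$, and set
\[
x=\sum_j\lambda_j c_j .
\]
Then $Gx=0$. Each coordinate of $x$ lies in $\langle\lambda_1,\ldots,\lambda_{h+1}\rangle_{\F_q}$, so $\rk_q(x)\le h+1$. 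Finally, $x\neq0$: the $c_j$ are $\F_q$-independent (because the $u_j$ are) and hence also $\F_{q^n}$-independent, as vectors over $\F_q$ stay independent under field extension.

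The only delicate points are two small facts: that $\F_q$-independence of vectors in $\F_q^N$ is preserved over $\F_{q^n}$, and the extension step in the first direction, which relies on $r\ge h+1$. Neither is a real obstacle, so the argument is essentially a bookkeeping translation between the two sides.
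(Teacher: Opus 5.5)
Your proof is correct and follows essentially the same route as the paper. Both directions go through Lemma~\ref{lem:independence}. A low-rank kernel vector is expanded as $\sum_j\beta_j c_j$ with $c_j\in\F_q^N$, and the resulting $\F_q$-independent, $\F_{q^n}$-dependent vectors are completed to $h+1$ of them using $\dim_{\F_q}U\ge r\ge h+1$.

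The only differences are cosmetic. You split off the case of $\F_q$-dependent columns explicitly rather than assuming column independence up front. Also, the $\F_q$-independence of the $c_j$ holds because the coordinates of $x$ span all of $\langle\beta_1,\ldots,\beta_m\rangle_{\F_q}$, not merely because the map is an expansion in a basis.
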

\begin{proof}
Suppose that the two matrix conditions hold. The absence of a rank-one vector in the kernel makes the columns independent over $\F_q$, and full row rank gives the spanning condition. If $h+1$ vectors $Gz_i$, with $z_i\in\F_q^N$, were independent over $\F_q$ but dependent over $\F_{q^n}$, then the $z_i$ would be independent over both fields. A nontrivial relation would give
\[
0\ne\sum_{i=1}^{h+1}\alpha_i z_i\in\ker G,
\qquad
\rk_q\left(\sum_{i=1}^{h+1}\alpha_i z_i\right)\le h+1,
\]
a contradiction.

Conversely, the assumptions on $U$ give full row rank and column independence over $\F_q$. Write a nonzero kernel vector of rank $b\le h+1$ as $\sum_{i=1}^{b}\alpha_i z_i$, with both $(\alpha_i)$ and $(z_i)$ independent over $\F_q$. The $Gz_i$ are independent over $\F_q$ but span a space of dimension at most $b-1$ over $\F_{q^n}$. Since $U$ spans $\F_{q^n}^r$, one has $N\ge r\ge h+1$. Completing the $Gz_i$ to $h+1$ independent vectors over $\F_q$ therefore contradicts Lemma~\ref{lem:independence}.
\end{proof}

\begin{proposition}\label{prop:criterion}
Let $r\ge h+1$ and $N=\lfloor rn/(h+1)\rfloor$. Let $G\in\Mat_{r\times N}(\F_{q^n})$, and let $U$ be the $\F_q$-span of its columns. The following are equivalent:
\begin{enumerate}
\item $U$ is $h$-scattered in $\F_{q^n}^r$ and $\dim_{\F_q}U=N$;
\item $\ker_{\F_{q^n}}G$ contains no nonzero vector of rank weight at most $h+1$.
\end{enumerate}
In this case, $G$ has rank $r$ and its columns are independent over $\F_q$. If $(h+1)\mid rn$, its row space is an $[N,r,n-h]_{q^n/q}$ MRD code.
\end{proposition}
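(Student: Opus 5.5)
The plan is to reduce everything to Lemma~\ref{lem:generalcriterion}. The only extra work is to show that condition (2) alone forces $G$ to have row rank $r$, and this is where the choice $N=\lfloor rn/(h+1)\rfloor$ enters.

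The implication (1)$\Rightarrow$(2) is immediate from Lemma~\ref{lem:generalcriterion}. Indeed, if $U$ is $h$-scattered of dimension $N$, that lemma gives that $G$ has row rank $r$ and that $\ker G$ has no nonzero vector of rank weight at most $h+1$.

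For (2)$\Rightarrow$(1), it suffices by Lemma~\ref{lem:generalcriterion} to prove $\rk G=r$. Suppose instead that $\rk G=r'<r$. Then $\ker G$ is an $\F_{q^n}$-linear code of length $N$ and dimension $N-r'$, with minimum rank distance at least $h+2$. I will apply the Singleton bound for rank-metric codes in its general form, $nk\le \max(n,N)\,(\min(n,N)-\delta+1)$. Only the case $N\ge n$ is stated in the paper, so I will run the same row-deletion argument; when $N<n$, deleting $\delta-1$ columns of the transposed matrix works instead. Taking $k=N-r'$ and $\delta=h+2$ in the case $N\ge n$ gives
\[
n(N-r')\le N(n-h-1),
\qquad\text{hence}\qquad
N(h+1)\le nr'\le n(r-1).
\]
Since $N=\lfloor rn/(h+1)\rfloor> rn/(h+1)-1$, we get $N(h+1)>rn-(h+1)$. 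Combining the two inequalities yields $rn-(h+1)<n(r-1)$, that is $n<h+1$. This contradicts $h\le n-2$, so $\rk G=r$.

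The same estimate settles the edge case $N<n$. Note that $r\ge h+1$ gives $N\ge n$ as soon as $r\ge h+1$ and $n/(h+1)\cdot r\ge n$, which always holds here. So $N\ge n$ is automatic, and the case $N<n$ never arises. I expect this rank argument to be the only real step. The point to watch is using the floor correctly so that the strict inequality goes through.

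It remains to prove the final assertions. The statements about rank $r$ and the $\F_q$-independence of the columns are contained in Lemma~\ref{lem:generalcriterion}. Now assume $(h+1)\mid rn$, so $N=rn/(h+1)$. The row space $C$ of $G$ is then an $[N,r]_{q^n/q}$ code whose dual is $\ker G$. The kernel has dimension $N-r$ and minimum distance at least $h+2$. The Singleton bound for it reads $n(N-r)\le N(n-h-1)$, i.e.\ $rn\ge N(h+1)$, which holds with equality. Hence $\ker G$ is MRD with minimum distance exactly $h+2$. By the duality of MRD codes recalled above, $C$ is MRD. Its minimum distance $\delta$ then satisfies $nr=N(n-\delta+1)$, which gives $\delta=n-h$. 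So $C$ is an $[N,r,n-h]_{q^n/q}$ MRD code.
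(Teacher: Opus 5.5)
Your proof is correct and follows the paper's route. The row-deletion Singleton bound on $\ker G$, combined with the floor estimate $N(h+1)>rn-(h+1)$, forces $\rk G=r$; the paper states this directly as $\rk G\ge r-\epsilon/n>r-1$ rather than by contradiction, and then Lemma~\ref{lem:generalcriterion} and MRD duality finish both your argument and the paper's. Your aside about the case $N<n$ is unnecessary, since, as you note yourself, $N\ge n$ follows from $r\ge h+1$.
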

\begin{proof}
Suppose first that (2) holds. A nonzero vector of $\F_q^N$ has rank weight one, so the columns of $G$ are independent over $\F_q$. Put $k=\rk_{\F_{q^n}}G$. The code $K=\ker_{\F_{q^n}}G$ has $\F_q$-dimension $n(N-k)$ and minimum rank distance at least $h+2$. Since $r\ge h+1$ gives $N=\lfloor rn/(h+1)\rfloor\ge n$, deleting $h+1$ rows gives
\[
n(N-k)\le N(n-h-1).
\]
Put $\epsilon=rn-(h+1)N$, so that $0\le\epsilon\le h<n$. Consequently,
\[
k\ge\frac{(h+1)N}{n}=r-\frac{\epsilon}{n}>r-1,
\]
and hence $k=r$.

Hence $G$ has rank $r$, and (1) follows from Lemma~\ref{lem:generalcriterion}. Conversely, if (1) holds, then $G$ has rank $r$ because $U$ spans $\F_{q^n}^r$, and (2) follows from Lemma~\ref{lem:generalcriterion}.

Finally, if $(h+1)\mid rn$, then $K$ has dimension $N-r=N(n-h-1)/n$, so the Singleton bound forces its minimum distance to be exactly $h+2$. In that case its dual, the row space of $G$, is MRD with minimum distance $n-h$.
\end{proof}

The rank criterion also gives a short proof of the direct-sum property, which is already known from \cite[Theorem~2.4]{CMPZ}.

\begin{corollary}\label{cor:sum}
Let $r_i\ge h+1$ and let $U_i$ be $h$-scattered in $\F_{q^n}^{r_i}$ for $i=1,2$. Then $U_1\oplus U_2$ is $h$-scattered in $\F_{q^n}^{r_1+r_2}$. If both summands are maximum, so is their direct sum.
\end{corollary}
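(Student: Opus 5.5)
We plan to deduce Corollary~\ref{cor:sum} from the kernel criterion of Lemma~\ref{lem:generalcriterion}, with Proposition~\ref{prop:criterion} used for the maximum case. First choose $\F_q$-bases of $U_1$ and $U_2$ and take them as the columns of matrices $G_1\in\Mat_{r_1\times N_1}(\F_{q^n})$ and $G_2\in\Mat_{r_2\times N_2}(\F_{q^n})$, where $N_i=\dim_{\F_q}U_i$. Since $U_i$ is $h$-scattered, it spans $\F_{q^n}^{r_i}$ and has dimension $N_i$. Lemma~\ref{lem:generalcriterion} therefore says that $G_i$ has row rank $r_i$ and that $\ker G_i$ contains no nonzero vector of rank weight at most $h+1$. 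The $\F_q$-span of the columns of the block-diagonal matrix $G=\mathrm{diag}(G_1,G_2)$ is exactly $U_1\oplus U_2$. The plan is to check the two conditions of Lemma~\ref{lem:generalcriterion} for $G$, with $r=r_1+r_2\ge h+1$.

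The row rank of $G$ is $r_1+r_2$, because $G$ is block diagonal. For the kernel, note that $\ker G=\ker G_1\times\ker G_2$. Let $x=(x_1,x_2)$ be a nonzero vector of $\ker G$, and suppose without loss of generality that $x_1\ne0$. The $\F_q$-span of the entries of $x_1$ is contained in the $\F_q$-span of the entries of $x$, so $\rk_q(x)\ge\rk_q(x_1)\ge h+2$. This monotonicity of rank weight under passing to a subset of coordinates is the only step that needs any care. Lemma~\ref{lem:generalcriterion} then shows that $U_1\oplus U_2$ is $h$-scattered of dimension $N_1+N_2$.

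For the maximum statement, suppose $N_i=r_in/(h+1)$ for $i=1,2$. Then
\[
\dim_{\F_q}(U_1\oplus U_2)=N_1+N_2=\frac{(r_1+r_2)n}{h+1},
\]
which attains the bound $\dim_{\F_q}U\le rn/(h+1)$ from \cite[Theorem~2.3]{CMPZ}, so the direct sum is maximum. We do not expect a real obstacle here. The only points to watch are that the hypothesis $r\ge h+1$ of Lemma~\ref{lem:generalcriterion} holds, which it does trivially, and the coordinate-restriction inequality for rank weight.
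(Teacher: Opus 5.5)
Your proposal is correct and follows essentially the same route as the paper's proof. Both take the block-diagonal $\F_q$-basis matrix and use that its kernel is the product of the two kernels. Both then bound the rank weight of a nonzero block by that of the whole vector, conclude via Lemma~\ref{lem:generalcriterion}, and add dimensions for the maximum case.
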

\begin{proof}
Lemma~\ref{lem:generalcriterion} shows that an $\F_q$-basis matrix of each $U_i$ has no nonzero kernel vector of rank at most $h+1$. Take their block diagonal sum. Its columns are independent over $\F_q$, it has full row rank, and its kernel is the direct sum of the two kernels. A nonzero block of a vector has rank weight at most that of the whole vector, so Lemma~\ref{lem:generalcriterion} proves the scattered property. If the two summands are maximum, their dimensions add to the required bound.
\end{proof}

The standard block in dimension $h+1$ is
\[
U_{\mathrm{st}}=\{(x,x^q,\ldots,x^{q^h}):x\in\F_{q^n}\}\le\F_{q^n}^{h+1}.
\]
It has $\F_q$-dimension $n$ and spans $\F_{q^n}^{h+1}$. The Moore determinant shows that any $h+1$ independent vectors over $\F_q$ are independent over $\F_{q^n}$. Hence $U_{\mathrm{st}}$ is maximum $h$-scattered; see \cite[Example~2.5]{CMPZ} and \cite{Gabidulin}.

\section{A random construction with an auxiliary field}\label{sec:first}

Fix a positive integer $s$ coprime to $n$. The fields $\F_{q^n}$ and $\F_{q^s}$ are linearly disjoint over $\F_q$, and their compositum is $\F_{q^{ns}}$. In particular, any $\F_q$-basis
\[
\mathcal B=(b_1,\ldots,b_s)
\]
of $\F_{q^s}$ is an $\F_{q^n}$-basis of $\F_{q^{ns}}$.

Choose a positive integer $\ell$ with $(h+1)\mid n\ell$, and put
\[
t=\frac{n\ell}{h+1},\qquad r=s\ell,\qquad N=st.
\]
We assume $r\ge h+1$, so that $N\ge n$. For $A=(v_1\mid\cdots\mid v_t)\in\Mat_{\ell\times t}(\F_{q^{ns}})$, define
\[
U_A=A\F_{q^s}^t=\F_{q^s} v_1+\cdots+\F_{q^s} v_t\le\F_{q^{ns}}^\ell.
\]
Here $\F_{q^{ns}}^\ell$ is viewed as an $r$-dimensional $\F_{q^n}$-space. No independence is assumed at this stage.

For $y=(y_1,\ldots,y_t)\in\F_{q^{ns}}^t$, write
\[
y_j=\sum_{i=1}^{s}x_{ji}b_i,\qquad x_{ji}\in\F_{q^n},
\]
and set
\[
\Phi_{\mathcal B}(y)=(x_{11},\ldots,x_{1s},\ldots,x_{t1},\ldots,x_{ts}),
\qquad \rk_q(y)=\rk_q(\Phi_{\mathcal B}(y)).
\]
We call $\rk_q(y)$ the \emph{expanded rank} of $y$. It does not depend on the chosen $\F_q$-basis of $\F_{q^s}$, since a change of basis acts on each block of $s$ coordinates by a matrix in $\GL(s,q)$. In general it differs from the rank weight of $y$ as a vector of $\F_{q^{ns}}^t$. From now on, the rank of a vector of $\F_{q^{ns}}^t$ is its expanded rank.

\begin{lemma}\label{lem:invariance}
For $\lambda\in\F_{q^n}^*$, $\mu\in\F_{q^s}^*$ and $y\in\F_{q^{ns}}^t$, one has
\[
\rk_q(\lambda\mu y)=\rk_q(y).
\]
Moreover, $U_A$ is maximum $h$-scattered if and only if $\ker_{\F_{q^{ns}}}A$ contains no nonzero vector of rank at most $h+1$.
\end{lemma}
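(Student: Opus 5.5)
For the invariance I will use the expansion map $\Phi_{\mathcal B}$ directly. Multiplication by $\lambda\in\F_{q^n}^*$ commutes with the expansion: if $y_j=\sum_i x_{ji}b_i$, then $\lambda y_j=\sum_i(\lambda x_{ji})b_i$, so $\Phi_{\mathcal B}(\lambda y)=\lambda\Phi_{\mathcal B}(y)$. Rank weight over $\F_q$ is invariant under $\F_{q^n}^*$-scaling, because $x\mapsto\lambda x$ is an $\F_q$-linear automorphism of $\F_{q^n}$. Multiplication by $\mu\in\F_{q^s}^*$ is an $\F_q$-linear automorphism of $\F_{q^s}$, and it also commutes with $\F_{q^n}$-scalars. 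Hence $\mu b_i=\sum_k c_{ik}b_k$ with $(c_{ik})\in\GL(s,q)$, and $\Phi_{\mathcal B}(\mu y)$ is obtained from $\Phi_{\mathcal B}(y)$ by applying this matrix blockwise. This is precisely the change-of-basis situation already noted to preserve expanded rank: the $\F_q$-span of the coordinates is unchanged, since each new coordinate is an $\F_q$-combination of the old ones within the same block, and the transformation is invertible.

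For the criterion I will reduce to Proposition~\ref{prop:criterion}. Set $v_{j,i}=b_iv_j$. Then $U_A$ is the $\F_q$-span of the $N=st$ vectors $v_{j,i}$. Fix an $\F_{q^n}$-basis of $\F_{q^{ns}}^\ell$, for instance $\mathcal B$ coordinatewise, which gives $r=s\ell$ coordinates. This identifies $\F_{q^{ns}}^\ell$ with $\F_{q^n}^r$, and the $v_{j,i}$ become the columns of a matrix $G\in\Mat_{r\times N}(\F_{q^n})$. Since $(h+1)\mid n\ell$, we get $rn/(h+1)=st=N$ exactly. Because $r\ge h+1$, Proposition~\ref{prop:criterion} applies: $U_A$ is maximum $h$-scattered if and only if $\ker_{\F_{q^n}}G$ contains no nonzero vector of rank weight at most $h+1$.

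It then suffices to identify $\ker_{\F_{q^n}}G$ with $\Phi_{\mathcal B}(\ker_{\F_{q^{ns}}}A)$. For $x=(x_{ji})\in\F_{q^n}^N$ we have $Gx=\sum_{j,i}x_{ji}b_iv_j=\sum_j y_jv_j=Ay$, where $y_j=\sum_i x_{ji}b_i$. By linear disjointness, $\mathcal B$ is an $\F_{q^n}$-basis of $\F_{q^{ns}}$, so $\Phi_{\mathcal B}$ is an $\F_{q^n}$-linear bijection $\F_{q^{ns}}^t\to\F_{q^n}^N$. Its inverse is $x\mapsto y$, and it carries $\ker A$ onto $\ker G$. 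Nonzero vectors correspond to nonzero vectors, and by definition the expanded rank of $y$ equals the rank weight of $\Phi_{\mathcal B}(y)$, which gives the equivalence.

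The only delicate step is checking that $Gx=Ay$. This must be read in the identification $\F_{q^{ns}}^\ell\cong\F_{q^n}^r$, which is $\F_{q^n}$-linear, so the $\F_{q^n}$-scalars $x_{ji}$ may be pulled through. The remaining steps are routine.
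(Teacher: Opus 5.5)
Your proof is correct and follows the paper's argument. For $\lambda$ you scale the expanded coordinates, and for $\mu$ you apply the same matrix in $\GL(s,q)$ to each block. For the criterion you identify $Ay=0$ with $G\Phi_{\mathcal B}(y)^{\mathsf T}=0$ and invoke Proposition~\ref{prop:criterion}, explicitly checking $rn/(h+1)=N$ and that $\Phi_{\mathcal B}$ is a bijection, which the paper leaves implicit.
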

\begin{proof}
Multiplication by $\lambda$ scales all the coordinates of $\Phi_{\mathcal B}(y)$ by $\lambda$. Multiplication by $\mu$ acts on each block of $s$ coordinates by the same invertible matrix over $\F_q$. Both operations preserve rank.

Fix also an $\F_{q^n}$-linear identification of $\F_{q^{ns}}^\ell$ with $\F_{q^n}^r$. The images of the $N$ vectors $b_i v_j$, ordered as the coordinates of $\Phi_{\mathcal B}$, form the columns of a matrix $G$ over $\F_{q^n}$. Under $\Phi_{\mathcal B}$, the equation $Ay=0$ is exactly $G\Phi_{\mathcal B}(y)^{\mathsf T}=0$. The assertion follows from Proposition~\ref{prop:criterion}.
\end{proof}

Let $\mathcal O$ be the set of points of $\PG(t-1,q^{ns})$ containing a nonzero vector of expanded rank at most $h+1$. Choose $A$ uniformly from $\Mat_{\ell\times t}(\F_{q^{ns}})$. For any fixed point of $\mathcal O$, the probability of lying in $\ker A$ is $q^{-ns\ell}$. The union bound therefore gives
\[
\Prob(U_A\text{ is not maximum }h\text{-scattered})
\le |\mathcal O|q^{-ns\ell}.
\]

\begin{theorem}\label{thm:first}
Let $A$ be chosen uniformly from $\Mat_{\ell\times t}(\F_{q^{ns}})$. Then
\[
\Prob(U_A\text{ is not maximum }h\text{-scattered})\le \beta_{n,h,s}(q)(1-q^{-(h+1)N})
<\beta_{n,h,s}(q),
\]
where
\[
\beta_{n,h,s}(q)=\gauss{n}{h+1}\frac{q-1}{(q^n-1)(q^s-1)}.
\]
Consequently, $\beta_{n,h,s}(q)\le1$ implies the existence of a maximum $h$-scattered subspace of the form $U_A$.
\end{theorem}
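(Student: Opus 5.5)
The plan is to bound the union bound $|\mathcal O|\,q^{-ns\ell}$ stated just before the theorem by counting low-rank vectors and dividing by the size of a scalar group that preserves expanded rank.

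\emph{Step 1: counting vectors.} Let $y\in\F_{q^{ns}}^t$ be nonzero with $\rk_q(y)\le h+1$. Then all $N=st$ coordinates of $\Phi_{\mathcal B}(y)$ lie in some $(h+1)$-dimensional $\F_q$-subspace $D$ of $\F_{q^n}$; this uses $h+1\le n$. For a fixed $D$ there are exactly $q^{(h+1)N}$ vectors in $\F_{q^n}^N$ with all entries in $D$. Since $\Phi_{\mathcal B}$ is a bijection $\F_{q^{ns}}^t\to\F_{q^n}^N$, a union bound over $D$ shows that the set $S$ of nonzero vectors of expanded rank at most $h+1$ satisfies
\[
|S|\le\gauss{n}{h+1}\bigl(q^{(h+1)N}-1\bigr).
\]

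\emph{Step 2: passing to points.} By Lemma~\ref{lem:invariance}, $S$ is stable under the multiplicative group $H=\F_{q^n}^*\F_{q^s}^*\le\F_{q^{ns}}^*$. Since $\gcd(n,s)=1$, we have $\F_{q^n}^*\cap\F_{q^s}^*=\F_q^*$, and therefore
\[
|H|=\frac{(q^n-1)(q^s-1)}{q-1}.
\]
Take a point of $\mathcal O$ and a vector $y\in S$ representing it. The whole orbit $Hy$ lies in $S$ and inside the same point. This orbit has exactly $|H|$ elements, because nonzero scalars of the field $\F_{q^{ns}}$ act freely on nonzero vectors. Distinct points are disjoint, so $|\mathcal O|\,|H|\le|S|$. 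This gives
\[
|\mathcal O|\le\gauss{n}{h+1}\frac{(q-1)\bigl(q^{(h+1)N}-1\bigr)}{(q^n-1)(q^s-1)}.
\]

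\emph{Step 3: the exponent.} We have $ns\ell=s(h+1)t=(h+1)N$. Hence
\[
|\mathcal O|\,q^{-ns\ell}\le\beta_{n,h,s}(q)\bigl(1-q^{-(h+1)N}\bigr)<\beta_{n,h,s}(q).
\]
If $\beta_{n,h,s}(q)\le1$, the failure probability is strictly less than $1$. So some $A$ has $\ker A$ free of nonzero vectors of rank at most $h+1$, and $U_A$ is maximum $h$-scattered by Lemma~\ref{lem:invariance}.

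The only delicate point is Step 2. It requires the orbit-size computation, which depends on $\gcd(n,s)=1$, and the observation that the point-wise union bound loses nothing from the overcounting of subspaces $D$ in Step 1. Everything else is direct bookkeeping.
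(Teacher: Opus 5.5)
Your proposal is correct and follows the paper's proof essentially step for step. You bound the nonzero vectors of expanded rank at most $h+1$ by $\gauss{n}{h+1}(q^{(h+1)N}-1)$ via the subspaces $D$, divide by $|H|=(q^n-1)(q^s-1)/(q-1)$ using the free action of $H=\F_{q^n}^*\F_{q^s}^*$ and the disjointness of points, and apply the union bound with $ns\ell=(h+1)N$ (the overcounting over $D$ in Step~1 only weakens the upper bound on $|S|$, so it needs no separate justification).
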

\begin{proof}
Let $B_{h+1}$ be the set of nonzero vectors of $\F_{q^n}^N$ of rank at most $h+1$. Every such vector has all its coordinates in some $(h+1)$-dimensional $\F_q$-subspace of $\F_{q^n}$. Therefore
\[
|B_{h+1}|\le\gauss{n}{h+1}(q^{(h+1)N}-1).
\]
The group $H=\F_{q^n}^*\F_{q^s}^*$ has order
\[
|H|=\frac{(q^n-1)(q^s-1)}{q-1},
\]
because $\F_{q^n}\cap\F_{q^s}=\F_q$. Its action by scalar multiplication on nonzero vectors is free. By Lemma~\ref{lem:invariance}, every point of $\mathcal O$ contains an $H$-orbit of vectors of expanded rank at most $h+1$, and $\Phi_{\mathcal B}$ maps the set of all such vectors bijectively onto $B_{h+1}$. Distinct points contain disjoint sets of vectors, whence
\[
|\mathcal O|\le\frac{|B_{h+1}|}{|H|}.
\]
Since $(h+1)N=ns\ell$, the preceding union bound gives the result.
\end{proof}

\begin{lemma}\label{lem:beta}
Let $1\le h\le n-2$. For every positive integer $s$,
\[
\beta_{n,h,s}(q)\le1\quad\Longleftrightarrow\quad s>h(n-h-2).
\]
Moreover, $\beta_{n,h,s}(q)=1$ if and only if $q=2$, $s=1$ and $h=n-2$.
\end{lemma}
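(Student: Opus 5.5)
Put $m=h+1$ and write $\gauss{n}{m}=\prod_{i=0}^{m-1}\frac{q^{n-i}-1}{q^{m-i}-1}$. The plan is to sandwich $\gauss{n}{m}$ between powers of $q$ and compare with $(q^n-1)(q^s-1)/(q-1)$. The exponent to aim for is $m(n-m)=(h+1)(n-h-1)$. The target inequality $\beta\le1$ reads
\[
\gauss{n}{h+1}\le\frac{(q^n-1)(q^s-1)}{q-1},
\]
whose right side is slightly larger than $q^{n+s-1}$. We have
\[
(h+1)(n-h-1)-(n-1)=h(n-h-2),
\]
so the threshold $s>h(n-h-2)$ is exactly the condition $n+s-1\ge (h+1)(n-h-1)$. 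Both directions therefore reduce to sharp enough two-sided estimates.

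For sufficiency, assume $s\ge h(n-h-2)+1$, so that $q^{s+n-1}\ge q^{m(n-m)}$. The task is to show
\[
(q-1)\gauss{n}{m}\le (q^n-1)(q^s-1).
\]
I would use the standard bound $\gauss{n}{m}<\gamma\,q^{m(n-m)}$ with $\gamma=\prod_{i\ge1}(1-q^{-i})^{-1}$, but that constant is too lossy at $q=2$. Instead I would compare directly: each factor satisfies
\[
\frac{q^{n-i}-1}{q^{m-i}-1}\le q^{n-m}\cdot\frac{1}{1-q^{-(m-i)}}.
\]
It is better to pair one factor with $q^n-1$ and use $(q^n-1)(q^s-1)\ge q^{n+s}(1-q^{-n}-q^{-s})$. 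The cleanest route is probably induction on $m$ via $\gauss{n}{m}=\gauss{n-1}{m-1}+q^{m}\gauss{n-1}{m}$, or else the explicit bound
\[
(q-1)\gauss{n}{m}\le q^{m(n-m)+1}-q^{m(n-m)}+\cdots,
\]
which is dominated by $q^{n+s}-q^n-q^s+1$ when $n+s-1\ge m(n-m)$. The edge case $h=n-2$ (so $m=n-1$ and the threshold is $s>0$) should be checked directly. There $\gauss{n}{n-1}=\frac{q^n-1}{q-1}$, giving $\beta=1/(q^s-1)\le1$, with equality iff $q=2$ and $s=1$.

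For necessity, assume $s\le h(n-h-2)$ and show $\beta>1$. The lower bound $\gauss{n}{m}\ge q^{m(n-m)}+q^{m(n-m)-1}$ holds when $1\le m\le n-1$ and $m(n-m)\ge1$; it counts subspaces with at least two Schubert cells, and is valid since $2\le m\le n-2$ in this range. We also have $(q^n-1)(q^s-1)/(q-1)<q^{n+s-1}\cdot\frac{q}{q-1}$. This gives $\beta>(1+q^{-1})\frac{q-1}{q}\,q^{m(n-m)-n-s+1}$, which is at least $1-q^{-2}$ times a power $q^{\ge1}$, and that exceeds $1$. For the equality statement, $\beta=1$ forces the boundary case. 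From the two estimates, equality can only occur when $n+s-1=m(n-m)$ fails strictly in neither direction, and the strict inequality in the sufficiency argument should hold except when $m=n-1$. That reduces to the computation above.

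I expect the main obstacle to be the sufficiency inequality at $q=2$ with $s$ exactly $h(n-h-2)+1$. There the ratio $\gauss{n}{m}/q^{m(n-m)}$ can approach $\gamma\approx3.46$, while $(q^n-1)(q^s-1)/((q-1)q^{n+s-1})$ is only about $2$. So I must verify the estimate carefully. If the crude product bound fails, I would instead use the cell decomposition
\[
\gauss{n}{m}=\sum_{\lambda}q^{|\lambda|}
\]
over partitions in an $m\times(n-m)$ box, which gives a polynomial coefficient comparison, bounding the number of partitions of size $m(n-m)-j$ by $p(j)$ against the expansion of the right side.
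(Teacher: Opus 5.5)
Your proposal has a genuine gap, and it starts with an off-by-one error in the reduction. From $(h+1)(n-h-1)-(n-1)=h(n-h-2)$, the condition $n+s-1\ge(h+1)(n-h-1)$ is equivalent to $s\ge h(n-h-2)$, not to $s>h(n-h-2)$. The true threshold corresponds to $n+s-2\ge(h+1)(n-h-1)$.

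This breaks both directions at the boundary value $s=h(n-h-2)$.
\begin{itemize}
\item In your necessity argument, the exponent $(h+1)(n-h-1)-n-s+1=h(n-h-2)-s$ is only $\ge0$, not $\ge1$. So at $s=h(n-h-2)$ you get only $\beta>1-q^{-2}$. Concretely, for $n=5$, $h=2$, $s=2$ one has $\beta_{5,2,2}(q)=(q^2+1)/(q^2-1)>1$, and your estimate does not detect this.
\item The same example satisfies $n+s-1=(h+1)(n-h-1)$. Hence your sufficiency claim, that the cell expansion of $(q-1)\gauss{n}{h+1}$ is dominated by $(q^n-1)(q^s-1)$ whenever $n+s-1\ge(h+1)(n-h-1)$, is false. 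You discarded exactly the extra factor $q$ that the hypothesis $s\ge h(n-h-2)+1$ supplies. The apparent ``$3.46$ versus $2$'' conflict at $q=2$ is an artifact of that loss.
\end{itemize}
Apart from this, the sufficiency direction is never carried out: you list several routes but complete none. The equality case is also asserted (``should hold'') rather than derived.

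The missing idea is to cancel exactly instead of sandwiching. Put $a=\min\{h+1,n-h-1\}$ and use $\gauss{n}{h+1}=\gauss{n}{a}$. The factor $q^n-1$ in its numerator and the factor $q-1$ in its denominator cancel against those in $\beta$, giving
\[
\beta_{n,h,s}(q)=q^{h(n-h-2)-s}\,\frac{\prod_{j=n-a+1}^{n-1}(1-q^{-j})}{(1-q^{-s})\prod_{j=2}^{a}(1-q^{-j})}.
\]
This removes the factor $(1-q^{-1})^{-1}$ that inflates the constant at $q=2$. The cases then go as follows.
\begin{itemize}
\item For $a=1$ the formula reads $1/(q^s-1)$, which is your edge case.
\item For $a\ge2$ and $s\le h(n-h-2)$: the numerator factor of index $n-a+i$ strictly exceeds the denominator factor of index $i+1$, because $n-a\ge a\ge2$. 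So the fraction is larger than $1$ and $\beta>1$, with the boundary $s=h(n-h-2)$ included.
\item For $a\ge2$ and $s\ge h(n-h-2)+1$: the power of $q$ is at most $1/2$ and the numerator is less than $1$. The denominator is at least $(3/4)^2$ when $a=2$ (then $s\ge2$), and larger than $(31/32)(9/16)$ when $a\ge3$ (then $s\ge5$). Hence $\beta<1$ strictly, which also gives the equality statement.
\end{itemize}
If you prefer to keep the Schubert-cell bound for necessity, you need the third cell. Put $M=(h+1)(n-h-1)$; then $\gauss{n}{h+1}\ge q^M+q^{M-1}+2q^{M-2}$. For $n+s\le M+1$ this gives $\beta>(1+q^{-1}+2q^{-2})(1-q^{-1})=1+q^{-2}-2q^{-3}\ge1$.
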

\begin{proof}
Set $a=\min\{h+1,n-h-1\}$, so that $n\ge2a$ and $h(n-h-2)=(a-1)(n-a-1)$. Cancelling the factors $q^n-1$ and $q-1$ in the Gaussian coefficient gives
\[
\beta_{n,h,s}(q)
=q^{h(n-h-2)-s}
\frac{\prod_{j=n-a+1}^{n-1}(1-q^{-j})}{(1-q^{-s})\prod_{j=2}^{a}(1-q^{-j})}.
\]
If $a=1$, that is, if $h=n-2$, this is $1/(q^s-1)$ and $h(n-h-2)=0$, proving the claim in this case.

Suppose $a\ge2$. For $1\le i\le a-1$, the numerator factor with index $n-a+i$ is strictly larger than the denominator factor with index $i+1$, since $n-a\ge a\ge2$. Thus $s\le h(n-h-2)$ gives $\beta_{n,h,s}(q)>1$.

It remains to take $s\ge h(n-h-2)+1$. If $a=2$, then $h(n-h-2)=n-3\ge1$, so $s\ge2$, and
\[
\beta_{n,h,s}(q)<\frac{1/2}{(3/4)^2}=\frac{8}{9}<1.
\]
If $a\ge3$, then $h(n-h-2)\ge(a-1)^2\ge4$, so $s\ge5$. Also
\[
\prod_{j=2}^{a}(1-q^{-j})
\ge\frac{3}{4}\prod_{j=3}^{a}(1-2^{-j})
>\frac{3}{4}\left(1-\sum_{j=3}^{\infty}2^{-j}\right)=\frac{9}{16}.
\]
It follows that
\[
\beta_{n,h,s}(q)<\frac{1/2}{(31/32)(9/16)}
=\frac{256}{279}<1.
\]
This proves the statement, including the equality case.
\end{proof}

\begin{corollary}\label{cor:blocks}
If $\gcd(s,n)=1$, $s>h(n-h-2)$, $(h+1)\mid n\ell$, and $s\ell\ge h+1$, then there exists a maximum $h$-scattered subspace of $V(s\ell,q^n)$ that is linear over $\F_{q^s}$.
\end{corollary}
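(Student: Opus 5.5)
The plan is to combine Theorem~\ref{thm:first} with Lemma~\ref{lem:beta}, after checking that the hypotheses of the corollary are exactly those of the construction in this section. Put $t=n\ell/(h+1)$; this is an integer because $(h+1)\mid n\ell$. Put $r=s\ell$ and $N=st$. The standing assumption of the section is $r\ge h+1$, and this is the hypothesis $s\ell\ge h+1$. Since $\gcd(s,n)=1$, the setup applies: an $\F_q$-basis of $\F_{q^s}$ is an $\F_{q^n}$-basis of $\F_{q^{ns}}$. Hence $\F_{q^{ns}}^\ell$ is an $r$-dimensional $\F_{q^n}$-space, and we identify it with $V(s\ell,q^n)$.

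Next, the hypothesis $s>h(n-h-2)$ gives $\beta_{n,h,s}(q)\le1$ by Lemma~\ref{lem:beta}. Theorem~\ref{thm:first} then bounds the failure probability by $\beta_{n,h,s}(q)\bigl(1-q^{-(h+1)N}\bigr)$. This is strictly less than $1$ even when $\beta_{n,h,s}(q)=1$, because the factor $1-q^{-(h+1)N}$ is strictly less than $1$. So some matrix $A\in\Mat_{\ell\times t}(\F_{q^{ns}})$ makes $U_A$ maximum $h$-scattered. By Lemma~\ref{lem:invariance} and Proposition~\ref{prop:criterion}, its $\F_q$-dimension is $N=s\ell n/(h+1)=rn/(h+1)$, as required.

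Finally, $U_A=A\F_{q^s}^t$ is by definition an $\F_{q^s}$-subspace of $\F_{q^{ns}}^\ell$, so it is linear over $\F_{q^s}$. The only step needing care is the borderline case $\beta_{n,h,s}(q)=1$, and the strict factor $1-q^{-(h+1)N}$ in Theorem~\ref{thm:first} handles it. There is no real obstacle.
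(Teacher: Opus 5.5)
Your proposal is correct and follows the paper's proof: Lemma~\ref{lem:beta} gives $\beta_{n,h,s}(q)\le1$, Theorem~\ref{thm:first} then produces an $A$ with $U_A$ maximum $h$-scattered, and $U_A=A\F_{q^s}^t$ is $\F_{q^s}$-linear by construction. Your remark that the strict factor $1-q^{-(h+1)N}$ handles the borderline case $\beta_{n,h,s}(q)=1$ is the same point the paper relies on when it states the ``Consequently'' clause of Theorem~\ref{thm:first}.
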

\begin{proof}
By Lemma~\ref{lem:beta}, $\beta_{n,h,s}(q)\le1$, so Theorem~\ref{thm:first} gives a matrix $A$ such that $U_A$ is maximum $h$-scattered. By construction, $U_A$ is an $\F_{q^s}$-subspace of $\F_{q^{ns}}^\ell$.
\end{proof}

\section{Auxiliary algebras without coprimality}\label{sec:etale}

We now drop the assumption $\gcd(n,s)=1$. Let $s\ge1$, put $d=\gcd(n,s)$, and consider the finite \'etale algebra
\[
R=\F_{q^n}\otimes_{\F_q}\F_{q^s}
\cong\prod_{\tau\in\Gamma}\F_{q^{ns/d}},
\qquad
\Gamma=\Gal(\F_{q^d}/\F_q).
\]
Indeed, if $m(Y)$ is the minimal polynomial over $\F_q$ of a generator of $\F_{q^s}$, then $R\cong\F_{q^n}[Y]/(m(Y))$, and $m$ has $d$ irreducible factors of degree $s/d$ over $\F_{q^n}$. We regard $\F_{q^n}$ and $\F_{q^s}$ as subrings of $R$ via $x\mapsto x\otimes1$ and $z\mapsto1\otimes z$. Their images intersect in $\F_q$, also when $d>1$, as one sees by writing elements of $R$ in a basis $\{a_i\otimes a'_j\}$, where $(a_i)$ and $(a'_j)$ are $\F_q$-bases of $\F_{q^n}$ and $\F_{q^s}$ with $a_1=a'_1=1$. The algebra has dimension $s$ over $\F_{q^n}$ and dimension $n$ over $1\otimes\F_{q^s}$.

Fix all fields in an algebraic closure. Identify $\Gamma$ with the additive group $\mathbb Z/d\mathbb Z$ by writing $\tau=\Frob_q^j$ on $\F_{q^d}$, with $0\le j<d$, and let $\widetilde\tau=\Frob_q^j$ on $\F_{q^s}$. We use the component convention
\[
x\otimes z\longmapsto\bigl(x\widetilde\tau(z)\bigr)_{\tau\in\Gamma}.
\]
This is an isomorphism of $\F_{q^n}$-algebras: the $s$ embeddings of $\F_{q^s}$ into $\F_{q^{ns/d}}$ are the restrictions of $\Frob_q^j$, $0\le j<s$, and they form $d$ orbits under $\Gal(\F_{q^{ns/d}}/\F_{q^n})=\langle\Frob_q^n\rangle$, represented by $0\le j<d$.
Let $e_\tau$ be the corresponding primitive idempotents. For $T\subseteq\Gamma$, put $e_T=\sum_{\tau\in T}e_\tau$, and for $y\in R^t$ let
\[
T(y)=\{\tau\in\Gamma:e_\tau y\ne0\}.
\]
Choose $\ell,t$ with $n\ell=(h+1)t$, and set $r=s\ell\ge h+1$, $N=st$. For a uniform matrix $A\in\Mat_{\ell\times t}(R)$, define
\[
U_A=A(1\otimes\F_{q^s})^t\le R^\ell\cong\F_{q^n}^{r}.
\]
If $(b_i)$ is an $\F_q$-basis of $\F_{q^s}$, then $(1\otimes b_i)$ is an $\F_{q^n}$-basis of $R$. Expanding in this basis defines $\Phi_{\mathcal B}:R^t\to\F_{q^n}^{N}$ and $\rk_q(y)=\rk_q(\Phi_{\mathcal B}(y))$ exactly as before.

For an $(h+1)$-dimensional $\F_q$-subspace $D\le\F_{q^n}$, put
\[
\kappa_T(D)=\dim_{\F_{q^s}}\bigl((D\otimes_{\F_q}\F_{q^s})\cap e_T R\bigr),
\]
where dimension is taken using the second scalar embedding.

\begin{theorem}\label{thm:etale}
With the notation above, define
\begin{equation}\label{eq:theta}
\Theta=\frac{q^d-1}{(q^n-1)(q^s-1)}
\sum_{\varnothing\ne T\subsetneq\Gamma}\ \sum_D
\bigl(q^{N\kappa_T(D)}-1\bigr)q^{-N(h+1)|T|/d},
\end{equation}
where $D$ runs over the $(h+1)$-dimensional $\F_q$-subspaces of $\F_{q^n}$. Then
\[
\Prob(U_A\text{ is not maximum }h\text{-scattered})
\le\beta_{n,h,s}(q)(1-q^{-(h+1)N})+\Theta.
\]
For every $T\subseteq\Gamma$,
\[
d\kappa_T(D)\le(h+1)|T|.
\]
When $d=1$, the sum is empty and Theorem~\ref{thm:first} is recovered.
\end{theorem}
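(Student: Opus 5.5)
The proof has two parts. The first is a union bound over kernel vectors, sorted by their support $T(y)$. The second is the inequality $d\kappa_T(D)\le(h+1)|T|$, which I will derive from a supermodularity argument together with invariance under the Frobenius of the second factor.

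\emph{Criterion and kernel probabilities.} The proof of Lemma~\ref{lem:invariance} carries over verbatim to $R$. Expanding the columns $(1\otimes b_i)v_j$ in an $\F_{q^n}$-basis of $R^\ell$ gives a matrix $G$ with $Ay=0$ if and only if $G\Phi_{\mathcal B}(y)^{\mathsf T}=0$. Hence, by Proposition~\ref{prop:criterion}, $U_A$ fails to be maximum $h$-scattered if and only if $\ker A$ contains a nonzero $y$ with $\rk_q(y)\le h+1$. Fix $y$ with support $T=T(y)$. Then $Ay=\sum_{\tau\in T}(e_\tau A)(e_\tau y)$ componentwise. The blocks $e_\tau A$ are independent and uniform over the field $\F_{q^{ns/d}}$, and each $e_\tau y\ne0$. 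Therefore
\[
\Prob(Ay=0)=q^{-(ns/d)\ell|T|}=q^{-N(h+1)|T|/d},
\]
using $ns\ell=(h+1)N$.

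\emph{Counting up to units.} Next I let $H=\F_{q^n}^*\F_{q^s}^*\le R^*$; its order is $(q^n-1)(q^s-1)/(q-1)$ because the two subrings meet in $\F_q$. Multiplication by $H$ preserves expanded rank (same proof as Lemma~\ref{lem:invariance}), preserves supports, and preserves the event $Ay=0$. So it suffices to take the union bound over $H$-orbits.

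Suppose $\lambda\mu$ fixes some $y$ with support $T\ne\varnothing$. Then $\lambda\widetilde\tau(\mu)=1$ for some $\tau$, so $\lambda\in\F_{q^n}\cap\F_{q^s}=\F_{q^d}$. This leaves at most $(q^d-1)/(q-1)$ choices of the element $\lambda\mu\in H$. When $T=\Gamma$, the equations for all $\tau$ force $\lambda\mu=1$, and the action is free.

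\begin{itemize}
\item Full support: the vectors of expanded rank at most $h+1$ inject via $\Phi_{\mathcal B}$ into $B_{h+1}$. Dividing by $|H|$ and multiplying by $q^{-N(h+1)}$ gives the $\beta_{n,h,s}(q)(1-q^{-(h+1)N})$ term, exactly as in Theorem~\ref{thm:first}.
\item Support $T\subsetneq\Gamma$: a vector of rank at most $h+1$ has all coordinates of $\Phi_{\mathcal B}(y)$ in some $(h+1)$-dimensional $D$. Hence $y\in\bigl((D\otimes\F_{q^s})\cap e_TR\bigr)^t$, and there are at most $q^{st\kappa_T(D)}-1=q^{N\kappa_T(D)}-1$ nonzero such $y$. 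Each orbit has size at least $(q^n-1)(q^s-1)/(q^d-1)$.
\end{itemize}
Summing over $D$ and over $\varnothing\ne T\subsetneq\Gamma$ gives exactly $\Theta$.

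\emph{The inequality $d\kappa_T(D)\le(h+1)|T|$.} Put $W=D\otimes\F_{q^s}$, an $(h+1)$-dimensional space over $1\otimes\F_{q^s}$, and $\kappa(T)=\dim(W\cap e_TR)$. Since $e_TR\cap e_{T'}R=e_{T\cap T'}R$ and $W\cap e_TR+W\cap e_{T'}R\subseteq W\cap e_{T\cup T'}R$, the function $\kappa$ is supermodular, with $\kappa(\varnothing)=0$ and $\kappa(\Gamma)=h+1$.

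It is also shift invariant, and I expect this to be the delicate step. The ring automorphism $\varphi=\mathrm{id}\otimes\Frob_q$ of $R$ fixes $W$ and is $\Frob_q$-semilinear over $1\otimes\F_{q^s}$, so it preserves dimensions. In the component convention, $\varphi$ moves the $j$-th component to index $j+1$. When the index wraps from $d-1$ to $d\equiv0$, the components differ by an automorphism $\Frob_q^{nk}$ of $\F_{q^{ns/d}}$ fixing $\F_{q^n}$. Such an automorphism does not affect vanishing, so $\varphi(e_TR)=e_{T+1}R$ and $\kappa(T+1)=\kappa(T)$.

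Now consider the $d$ translates $T+j$, $j\in\mathbb Z/d\mathbb Z$, which cover each element of $\Gamma$ exactly $|T|$ times. Repeatedly replacing a crossing pair by its union and intersection does not decrease the sum of $\kappa$ and preserves multiplicities. The process terminates in a chain, and a chain covering every point exactly $|T|$ times consists of $|T|$ copies of $\Gamma$ and copies of $\varnothing$. Hence
\[
d\,\kappa(T)=\sum_j\kappa(T+j)\le|T|\,\kappa(\Gamma)=(h+1)|T|.
\]
For $d=1$ there is no proper nonempty $T$, so $\Theta=0$ and we recover Theorem~\ref{thm:first}.
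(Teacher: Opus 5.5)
Your proof is correct and follows the paper's argument. It uses a union bound over unit orbits sorted by support; you take $H$-orbits where the paper takes $R^*$-orbits, but the stabilizer bound $(q^d-1)/(q-1)$ gives the same count. It then concludes with supermodularity, translation invariance and uncrossing of the $d$ translates of $T$.

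The one difference is that you verify supermodularity and shift invariance intrinsically for $W\cap e_TR$, using $\mathrm{id}\otimes\Frob_q$. In the paper's component convention this automorphism sends $e_TR$ to $e_{T-1}R$ rather than $e_{T+1}R$, which is a harmless sign slip. The paper instead computes $\kappa_T(D)=\dim_{\F_{q^d}}X_T$ with $X_T=\bigcap_{\tau\notin T}\tau^{-1}K_D$, and it reuses that coordinate description for the equality case in Theorem~\ref{thm:etale-equality}.
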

\begin{proof}
Multiplication by $x\otimes1$, with $x\in\F_{q^n}^*$, scales all expanded coordinates, while multiplication by $1\otimes z$, with $z\in\F_{q^s}^*$, acts on each block by an element of $\GL(s,q)$. Thus both preserve rank. Expanding the vectors $(1\otimes b_i)v_j$, where $v_j$ is the $j$-th column of $A$, into an $r\times N$ matrix $G$ identifies $Ay=0$ with $G\Phi_{\mathcal B}(y)^{\mathsf T}=0$. Proposition~\ref{prop:criterion} therefore applies without assuming that $R$ is a field.

In the product decomposition of $R$, the components of $A$ are independent uniform matrices. Hence, for $y\ne0$,
\[
\Prob(Ay=0)=q^{-ns\ell|T(y)|/d}=q^{-N(h+1)|T(y)|/d}.
\]
The event and this probability are constant on each $R^*$-orbit. The subgroup
\[
H=(\F_{q^n}^*\otimes1)(1\otimes\F_{q^s}^*)
\]
has order $(q^n-1)(q^s-1)/(q-1)$ and preserves rank and support. On any nonzero component, the condition that $x\otimes z$ acts as $1$ forces $x,z$ to arise from an element of $\F_{q^d}^*$. Thus every stabilizer in $H$ has order at most $(q^d-1)/(q-1)$. On full-support vectors it is trivial. Every $R^*$-orbit meeting the low-rank vectors therefore contains at least $|H|(q-1)/(q^d-1)$ such vectors, and at least $|H|$ when the support is full.

We apply the union bound to the $R^*$-orbits of nonzero vectors of rank at most $h+1$. Such vectors lie in $(D\otimes\F_{q^s})^t$ for some $D$. For a fixed proper support $T$, the vectors of $(D\otimes\F_{q^s})^t$ with support contained in $T$ form an $\F_{q^s}$-space of dimension $t\kappa_T(D)$, so there are at most $\sum_D(q^{N\kappa_T(D)}-1)$ vectors of rank at most $h+1$ with support $T$. Dividing by the lower bound on the number of such vectors in an orbit, and multiplying by $q^{-N(h+1)|T|/d}$, gives the term of~\eqref{eq:theta} indexed by $T$. For full support, the same argument as in Theorem~\ref{thm:first} gives the term $\beta_{n,h,s}(q)(1-q^{-(h+1)N})$.

It remains to bound $\kappa_T(D)$. Choose an $\F_q$-basis $x_1,\ldots,x_{h+1}$ of $D$ and put
\[
K_D=\left\{c\in\F_{q^d}^{h+1}:\sum_{i=1}^{h+1}c_i x_i=0\right\}.
\]
Choose an $\F_{q^d}$-basis $u_1,\ldots,u_{s/d}$ of $\F_{q^s}$ and write $z_i=\sum_j c_{ij}u_j$, with $c_{ij}\in\F_{q^d}$. The $\tau$-component of $\sum_i x_i\otimes z_i$ is
\[
\sum_{j=1}^{s/d}\widetilde\tau(u_j)
\left(\sum_{i=1}^{h+1}x_i\tau(c_{ij})\right).
\]
The fields $\F_{q^n}$ and $\F_{q^s}$ are linearly disjoint over $\F_{q^d}$, so the $\widetilde\tau(u_j)$ are independent over $\F_{q^n}$. This component vanishes precisely when every column $(c_{ij})_i$ lies in $\tau^{-1}K_D$. Requiring vanishing outside $T$ therefore gives $s/d$ independent choices in $X_T=\bigcap_{\tau\notin T}\tau^{-1}K_D$. Dividing the resulting $\F_{q^d}$-dimension by $s/d$ gives
\[
\kappa_T(D)=\dim_{\F_{q^d}}X_T,
\qquad
X_T=\bigcap_{\tau\notin T}\tau^{-1}K_D.
\]
The function $f(T)=\dim_{\F_{q^d}}X_T$ is supermodular, since
$X_T\cap X_{T'}=X_{T\cap T'}$ and $X_T+X_{T'}\subseteq X_{T\cup T'}$. It is invariant under translation in $\Gamma$, because coordinatewise Galois automorphisms permute the spaces $\tau^{-1}K_D$. Also $f(\Gamma)=h+1$ and $f(\varnothing)=0$. For the last assertion, $X_\varnothing$ is Galois-stable and hence has a basis over $\F_q$, whereas $K_D$ contains no nonzero vector of $\F_q^{h+1}$. Here we use the following elementary form of Galois descent: if a subspace of $\F_{q^d}^{h+1}$ is stable under a group $\Delta$ of automorphisms of $\F_{q^d}$ acting coordinatewise, then its reduced row-echelon basis is fixed by $\Delta$, so the subspace has a basis with entries in the fixed field of $\Delta$.

Consider the multiset of the $d$ translates of $T$. Replacing two incomparable members by their union and intersection does not decrease the sum of $f$, preserves the multiplicity of every group element, and strictly increases the sum of the squared cardinalities. The process terminates in a chain. Since every element has multiplicity $|T|$, this chain consists of $|T|$ copies of $\Gamma$ and $d-|T|$ empty sets. Therefore $d f(T)\le(h+1)|T|$.
\end{proof}

The next result determines when equality holds in the inequality $d\kappa_T(D)\le(h+1)|T|$ of Theorem~\ref{thm:etale}.

\begin{theorem}\label{thm:etale-equality}
Let $\varnothing\ne T\subsetneq\Gamma$, let $\Gamma_T$ be its stabilizer under translation, and put $d_T=[\Gamma:\Gamma_T]$. Then
\[
d\kappa_T(D)=(h+1)|T|
\quad\Longleftrightarrow\quad
D\text{ is linear over }\F_{q^{d_T}}.
\]
In particular, equality forces $d_T\mid h+1$. If $D$ is not linear over $\F_{q^{d_T}}$, then
\[
d\kappa_T(D)\le(h+1)|T|-\gcd(d,h+1).
\]
\end{theorem}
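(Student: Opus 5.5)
The plan is to prove the two implications separately, working throughout with the description $\kappa_T(D)=f(T)=\dim_{\F_{q^d}}X_T$ from the proof of Theorem~\ref{thm:etale}. We use the identification $V:=D\otimes_{\F_q}\F_{q^s}$ and $Y_T:=V\cap e_TR$, so that $\kappa_T(D)=\dim_{\F_{q^s}}Y_T$. Put $m=|T|/|\Gamma_T|$, the number of $\Gamma_T$-cosets contained in $T$, so that $|T|/d=m/d_T$. The closing assertions will be arithmetic consequences of the equivalence.

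\emph{Sufficiency.} Suppose $D$ is $\F_{q^{d_T}}$-linear. Since $d_T\mid d$, which divides both $n$ and $s$, the algebra $S=\F_{q^{d_T}}\otimes_{\F_q}\F_{q^{d_T}}\subseteq R$ is defined, and $V$ is an $S$-module via $(a\otimes b)(x\otimes z)=ax\otimes bz$. Now $S\cong\prod_{c\in\Gamma/\Gamma_T}\F_{q^{d_T}}$, and in the component convention its primitive idempotents are $\varepsilon_c=e_c$, the sum of $e_\tau$ over the coset $c$. This holds because the $\tau$-component of $a\otimes b$ depends only on $\tau$ modulo $\Gamma_T=\Gal(\F_{q^d}/\F_{q^{d_T}})$. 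Hence $V=\bigoplus_c\varepsilon_cV$, and $Y_T=\bigoplus_{c\subseteq T}\varepsilon_cV$, since $T$ is a union of $\Gamma_T$-cosets. The twisting automorphisms $x\otimes z\mapsto x\otimes\Frob_q^j(z)$ preserve $V$ and permute the $\varepsilon_c$ transitively, so all $\varepsilon_cV$ have the same $\F_{q^s}$-dimension $(h+1)/d_T$. In particular $d_T\mid h+1$, and $\kappa_T(D)=m(h+1)/d_T=(h+1)|T|/d$.

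\emph{Necessity.} Here I would show that equality forces $V$ to be stable under multiplication by $e_T$, that is, $e_TV=Y_T$. The kernel of multiplication by $e_T$ on $V$ is $Y_{\Gamma\setminus T}$ and its image contains $Y_T$, so $e_TV=Y_T$ is equivalent to $f(T)+f(\Gamma\setminus T)=h+1$. Supermodularity gives "$\le$". For "$\ge$", I would rerun the uncrossing argument of Theorem~\ref{thm:etale} under the hypothesis $df(T)=(h+1)|T|$. Then every uncrossing step must be tight, so $X_A+X_B=X_{A\cup B}$ holds for all pairs of members that occur. I would exploit the freedom in the order of the steps, together with translation invariance, to obtain the needed modular identity involving $\Gamma\setminus T$. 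Concretely, I would try uncrossing the translates of $T$ together with their complements, and compare with the translates of $\Gamma\setminus T$ using the bound $df(\Gamma\setminus T)\le(h+1)(d-|T|)$. This is the step I expect to be the main obstacle, because the lattice generated by the translates of $T$ need not contain $\Gamma\setminus T$.

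Once $e_TV=V\cap e_TR$ is known, translation invariance gives $e_{T+\gamma}V\subseteq V$ for all $\gamma$. Taking products and sums, $V$ is then stable under every idempotent in the Boolean algebra generated by the translates of $T$. Its atoms are exactly the $\Gamma_T$-cosets: $\alpha$ and $\beta$ are separated by no translate if and only if $T+(\beta-\alpha)=T$. So $V$ is an $S$-module, and in particular it is stable under $a\otimes1$ for $a\in\F_{q^{d_T}}$. Expanding in an $\F_q$-basis of $\F_{q^s}$ then shows $aD\subseteq D$.

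\emph{Remaining assertions.} The divisibility $d_T\mid h+1$ was obtained in the sufficiency part. For the final bound, note that $d\kappa_T(D)$ is a multiple of $d$, while $(h+1)|T|$ is a multiple of $h+1$, so both are multiples of $\gcd(d,h+1)$. If $D$ is not $\F_{q^{d_T}}$-linear, the inequality of Theorem~\ref{thm:etale} is strict, and the gap is therefore at least $\gcd(d,h+1)$.
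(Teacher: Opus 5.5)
Your sufficiency argument is correct: you split $V=D\otimes_{\F_q}\F_{q^s}$ along the idempotents of $\F_{q^{d_T}}\otimes_{\F_q}\F_{q^{d_T}}$ and use the twisting automorphisms to make the pieces equal, which is a valid alternative to the paper's descent-plus-subadditivity argument. Your closing divisibility argument is the same as the paper's.

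The gap is in necessity. You never prove $f(T)+f(\Gamma\setminus T)\ge h+1$, and the route you sketch cannot prove it. Uncrossing the translates of $T$ together with their complements, and the bound $df(\Gamma\setminus T)\le(h+1)(d-|T|)$, only give upper bounds on $f$, which is the direction supermodularity already gives you. What you need is a lower bound on $f(\Gamma\setminus T)$, i.e.\ that $\Gamma\setminus T$ itself attains equality. Also, ``equality forces $d_T\mid h+1$'' depends on necessity, not on your sufficiency part, so that assertion is not yet established either.

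The missing idea is short. Call $X\subseteq\Gamma$ tight if $df(X)=(h+1)|X|$.
\begin{enumerate}
\item If $A$ and $B$ are tight, supermodularity gives $f(A\cup B)+f(A\cap B)\ge(h+1)(|A\cup B|+|A\cap B|)/d$. The upper bound of Theorem~\ref{thm:etale}, applied to each term, then forces both $A\cup B$ and $A\cap B$ to be tight. Translates of tight sets are tight.
\item Your worry about the lattice is unfounded. The intersection of all translates of $T$ containing $x$ is exactly $x+\Gamma_T$: if $y$ lies in all of them, then $T+(y-x)\subseteq T$, and equal cardinalities give equality.
\item Hence every coset of $\Gamma_T$ lies in the lattice generated by the translates of $T$ and is tight. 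So is every union of cosets, in particular $\Gamma\setminus T$.
\end{enumerate}
This is exactly the paper's argument, written there dually for the submodular function $\varrho(S)=h+1-f(\Gamma\setminus S)$ with $S=\Gamma\setminus T$.

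Once cosets are tight you can even skip $e_TV$. The spaces $V\cap e_cR$, for the $d_T$ cosets $c$, lie in independent summands $e_cR$. Their total dimension is $d_T\cdot(h+1)/d_T=h+1=\dim_{\F_{q^s}}V$. So $V=\bigoplus_c(V\cap e_cR)$ is stable under every $e_c$, and your $S$-module argument gives $aD\subseteq D$. Completed this way, your idempotent route replaces the paper's final step, which applies Galois descent to $\bigcap_{\tau\in\Gamma_T}\tau^{-1}K_D$ to obtain $\varrho(\Gamma_T)=\dim_{\F_{q^{d_T}}}(\F_{q^{d_T}}D)$.
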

\begin{proof}
Write $Y_\tau=\tau^{-1}K_D$ and, for $S\subseteq\Gamma$, let
\[
\varrho(S)=\codim_{\F_{q^d}}\bigcap_{\tau\in S}Y_\tau.
\]
This is a nondecreasing, translation-invariant submodular function, with $\varrho(\Gamma)=h+1$. Theorem~\ref{thm:etale} gives
\[
\varrho(S)\ge\frac{h+1}{d}|S|.
\]
Call $S$ tight when equality holds. Submodularity and the displayed lower bound imply that unions and intersections of tight sets are tight. Translates of tight sets are also tight.

Suppose $S=\Gamma\setminus T$ is tight, and let $\Gamma_T=\Stab(S)$. For each $x\in\Gamma$, the intersection of all translates of $S$ containing $x$ is exactly $x+\Gamma_T$. Indeed, membership of $y$ in this intersection means $x-S\subseteq y-S$; equal cardinalities then give $S+(y-x)=S$. Thus every coset of $\Gamma_T$ is tight, and
\[
\varrho(\Gamma_T)=\frac{h+1}{d_T}.
\]
The space $\bigcap_{\tau\in \Gamma_T}Y_\tau$ is stable under $\Gamma_T$. The vectors fixed by $\Gamma_T$ are precisely the tuples $(c_i)\in\F_{q^{d_T}}^{h+1}$ satisfying $\sum_i c_i x_i=0$. The reduced row-echelon descent argument used in the proof of Theorem~\ref{thm:etale} identifies their dimension over $\F_{q^{d_T}}$ with the dimension of the intersection over $\F_{q^d}$. Consequently,
\[
\varrho(\Gamma_T)=\dim_{\F_{q^{d_T}}}(\F_{q^{d_T}}D).
\]
The preceding two equations imply that $\F_{q^{d_T}}D$ and $D$ have the same $\F_q$-dimension, so they agree.

Conversely, if $D$ is $\F_{q^{d_T}}$-linear, descent gives $\varrho(\Gamma_T)=(h+1)/d_T$. The same holds on each coset by translation. Since $S$ is a union of cosets, subadditivity gives $\varrho(S)\le(h+1)|S|/d$, and the reverse inequality was already proved. This establishes equality. In the strict case, the positive integer $(h+1)|T|-d\kappa_T(D)$ is divisible by $\gcd(d,h+1)$, proving the final assertion.
\end{proof}

\subsection{A uniform bound for the auxiliary degree}

We now use Theorem~\ref{thm:etale-equality} to bound $\Theta$ when $s>h(n-h-2)$.

\begin{lemma}\label{lem:uniform-etale}
Let $2\le h\le n-4$ and $g=\gcd(n,h+1)>1$. Suppose
\[
s>h(n-h-2),\qquad g\nmid s.
\]
For every prime power $q$, the construction $U_A$ of Theorem~\ref{thm:etale} with
\[
\ell=\frac{h+1}{g},\qquad t=\frac ng
\]
gives a maximum $h$-scattered subspace of $V(s(h+1)/g,q^n)$.
\end{lemma}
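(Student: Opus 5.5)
The plan is to apply Theorem~\ref{thm:etale} with $\ell=(h+1)/g$ and $t=n/g$. Then $n\ell=(h+1)t$, and $r=s\ell=s(h+1)/g\ge h+1$ because $s>h(n-h-2)\ge g$. We also have $N=st=sn/g$ and $(h+1)N/d=s\ell n/d$. It suffices to prove
\[
\beta_{n,h,s}(q)(1-q^{-(h+1)N})+\Theta<1.
\]
Since $2\le h\le n-4$, we have $a=\min\{h+1,n-h-1\}\ge3$. The proof of Lemma~\ref{lem:beta} then gives $\beta_{n,h,s}(q)<256/279$, and in fact $\beta_{n,h,s}(q)<q^{-1}\cdot(\text{const})$ as soon as $s\ge h(n-h-2)+1$. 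So the work is to show $\Theta$ is smaller than the remaining slack, uniformly in $q$. Put $d=\gcd(n,s)$. If $d=1$ there is nothing more to do, so assume $d>1$.

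I will split the inner sum of \eqref{eq:theta} according to the dichotomy of Theorem~\ref{thm:etale-equality}.

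\emph{Non-extremal $D$.} If $D$ is not $\F_{q^{d_T}}$-linear, the exponent of $q$ in $(q^{N\kappa_T(D)}-1)q^{-N(h+1)|T|/d}$ is at most $-N\gcd(d,h+1)/d\le -N/d=-sn/(gd)\le -s/g$. There are at most $2^d$ choices of $T$ and at most $\gauss{n}{h+1}\le 4q^{(h+1)(n-h-1)}$ choices of $D$. Together with the prefactor $(q^d-1)/((q^n-1)(q^s-1))\approx q^{d-n-s}$, this part is at most roughly
\[
2^{d+2}q^{(h+1)(n-h-1)+d-n-s-sn/(gd)}.
\]
I will then use $d\mid n$, $d\mid s$, $d\le n/2$, $g\le h+1$ and $s\ge h(n-h-2)+1$ to show that the exponent is very negative. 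The key point is that $sn/(gd)\ge s\cdot(n/d)/g$ and $n/d\ge2$. Combined with the $-s$ from the prefactor, this should beat $(h+1)(n-h-1)+d-n$.

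\emph{Extremal $D$.} Now $D$ is $\F_{q^{d_T}}$-linear, so $d_T>1$ and $d_T\mid\gcd(d,h+1)=\gcd(s,g)$. Because $g\nmid s$, this is a proper divisor of $g$. In this case each summand is less than $1$. The number of such $D$ is the Gaussian coefficient $\gaussf{n/d_T}{(h+1)/d_T}{q^{d_T}}\le4q^{(h+1)(n-h-1)/d_T}$. So this part is bounded by about
\[
2^{d+2}q^{(h+1)(n-h-1)/2+d-n-s}.
\]
The comparison to make is then $(h+1)(n-h-1)/2+d<n+s$, using $s\ge h(n-h-2)+1$ and $d\le\min(s,n/2)$. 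Writing $(h+1)(n-h-1)=h(n-h-2)+n-1$ should make this clean.

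The main obstacle is the extremal part when $q=2$. There the constants $2^{d+2}$ and the Gaussian-coefficient bounds are not negligible, and the slack left by $\beta$ is only about $23/279$. If the crude count fails for small $q$, I will count $T$ more carefully. An extremal $T$ must be a union of cosets of a subgroup $\Gamma_T$ of index $d_T$, where $d_T$ is a proper divisor of $\gcd(s,g)$, so there are far fewer than $2^d$ such $T$. I would also keep the exact factor $q^{-s}$ rather than $1/(q^s-1)$, and sharpen $\beta$ via the product formula in Lemma~\ref{lem:beta}.
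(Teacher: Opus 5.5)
Your strategy matches the paper's. You apply Theorem~\ref{thm:etale}, split the subspaces $D$ in $\Theta$ by Theorem~\ref{thm:etale-equality}, and give the non-extremal $D$ a power saving. You count the extremal $D$ as $\F_{q^{d_T}}$-subspaces with $T$ a union of cosets, and you reduce to $q=2$.

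The gap is that the content of the lemma, beyond those two theorems, is the final numerical inequality. You leave it at ``should beat'', and the route you sketch for the non-extremal part is too lossy to close it. You discard $\gamma=\gcd(d,h+1)$ and then rely on $sn/(gd)\ge s/g$, or on $n/d\ge2$.

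Take $(n,h,s)=(8,3,10)$ and $q=2$. Then $d=2$, $g=4$, $N=20$, and $\beta_{8,3,10}(2)=200787/260865\approx0.77$.
\begin{itemize}
\item With the saving $s/g$ you display, your estimate for the non-extremal part is $\beta\cdot3\cdot2\cdot2^{-5/2}\approx0.82$.
\item With $n/d\ge2$, your bound $2^{d+2}q^{(h+1)(n-h-1)+d-n-s-2s/g}$ equals $1/2$.
\end{itemize}
Either way the total exceeds $1$. The extremal part, which you single out as the main obstacle, contributes less than $0.01$ here. Keeping the exact value $N/d=10$ would rescue this instance, but you still need an argument valid for all $(n,h,d)$.

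The missing observation is that $\gamma=\gcd(d,g)$ and $\lcm(d,g)=dg/\gamma$ divides $n$. Hence $N\gamma/d=sn/\lcm(d,g)\ge s$. Keeping $\gamma$, the non-extremal part is at most $\beta_{n,h,s}(q)(2^d-2)(q^d-1)/((q-1)q^s)$. At $q=2$ this is less than $2^{2d-s}\beta\le2^{7-n}\beta$, using $d\le n/2$ and $s\ge2n-7$. The extremal terms have $q$-exponent at most $d-(h(n-h-2)+n+3)/2$.

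Even with this, a complete proof needs the remaining steps of the paper:
\begin{itemize}
\item show that the resulting bound decreases in $s$ (for fixed $d$) and in $q$, so only $q=2$ and the least admissible $s$ matter;
\item give a uniform estimate for $n\ge11$;
\item check explicitly the finitely many pairs with $n\le10$.
\end{itemize}
In the last step the margin is small: the paper's bound for $(n,h)=(8,3)$ is about $0.91$.
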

\begin{proof}
Put $d=\gcd(n,s)$ and $\gamma=\gcd(d,h+1)$. When $d=1$, Corollary~\ref{cor:blocks} applies. Hence assume $d>1$. Since $g\nmid s$, we have $d<n$ and therefore $d\le n/2$. Moreover,
\[
h(n-h-2)\ge2(n-4),\qquad s\ge2n-7\ge5.
\]
Here $N=sn/g$, and $r=s(h+1)/g\ge h+1$ because $s>h(n-h-2)\ge2h\ge g$. Since $d\mid n$, we have $\gamma=\gcd(d,g)$. Since $\lcm(d,g)=dg/\gamma$ divides $n$, it follows that
\[
\frac{N\gamma}{d}\ge s.
\]

For each proper nonempty support $T$, split the subspaces $D$ in~\eqref{eq:theta} according to Theorem~\ref{thm:etale-equality}. If $d\kappa_T(D)<(h+1)|T|$, the term of $D$ is less than $q^{-N((h+1)|T|-d\kappa_T(D))/d}\le q^{-N\gamma/d}$. If $d\kappa_T(D)=(h+1)|T|$, the term of $D$ is less than $1$, and $D$ is linear over $\F_{q^{d_T}}$, where $d_T>1$ divides $\gamma$. There are
\[
\gaussf{n/d_T}{(h+1)/d_T}{q^{d_T}}
\]
such subspaces. For each divisor $d'$ of $d$, at most $2^{d'}-2$ proper nonempty subsets have stabilizer of index $d'$, because they are unions of cosets of the unique subgroup of index $d'$. Consequently,
\begin{equation}\label{eq:theta-split}
\begin{split}
\Theta\le\frac{q^d-1}{(q^n-1)(q^s-1)}\biggl(&
(2^d-2)\gauss{n}{h+1}q^{-N\gamma/d}\\
&+\sum_{\substack{d'\mid\gamma\\d'>1}}(2^{d'}-2)
\gaussf{n/d'}{(h+1)/d'}{q^{d'}}\biggr).
\end{split}
\end{equation}

Write $\omega_k(Q)=\prod_{i=k}^{\infty}(1-Q^{-i})$. The Gaussian product formula gives
\[
\beta_{n,h,s}(q)
\le\frac{q^{h(n-h-2)-s}}{(1-q^{-s})\omega_2(q)}.
\]
Using $N\gamma/d\ge s$, $q^d-1<q^d$ and $\gaussf{M}{K}{Q}\le Q^{K(M-K)}/\omega_1(Q)$ in~\eqref{eq:theta-split}, we see that the failure probability in Theorem~\ref{thm:etale} is strictly less than
\begin{equation}\label{eq:psi}
\begin{split}
\Psi_s(q)={}&
\frac{q^{h(n-h-2)-s}}{(1-q^{-s})\omega_2(q)}
\left(1+\frac{(2^d-2)(q^d-1)}{(q-1)q^s}\right)\\
&+\sum_{\substack{d'\mid\gamma\\d'>1}}
\frac{(2^{d'}-2)q^{d-n-s+(h+1)(n-h-1)/d'}}
{(1-q^{-n})(1-q^{-s})\omega_1(q^{d'})}.
\end{split}
\end{equation}
For fixed $d$, this expression decreases with $s$ and with $q$. Indeed, after expanding $(q^d-1)/(q-1)$, every power of $q$ in the numerators has negative exponent. For the terms in the sum, use $d'\ge2$ and
\[
(h+1)(n-h-1)=h(n-h-2)+n-1
\]
to bound the exponent by $d-(h(n-h-2)+n+3)/2<0$. It therefore suffices to show $\Psi_s(2)<1$.

We use only the elementary estimates
\[
\omega_2(2)>\frac9{16},\qquad
\omega_1(4)>\frac23,\qquad
\omega_1(8)>\frac67.
\]
The first follows by separating the factor $1-2^{-2}$ and using
$\prod_i(1-u_i)\ge1-\sum_i u_i$ on the tail. The other two follow from the same inequality applied to the whole product.

Suppose first that $n\ge11$. Then $s\ge15$, and the first factor in~\eqref{eq:psi} is less than
\[
\frac{1/2}{(1-2^{-15})(9/16)}<\frac{89}{100}.
\]
The extra term in its parentheses is less than $2^{2d-s}\le2^{7-n}$. For $d'=2$, the exponent in the second line of~\eqref{eq:psi} is at most $-n+5/2$, so its contribution before the two factors $1-2^{-n}$ and $1-2^{-s}$ is less than
\[
3\cdot2^{5/2}\,2^{-n}<17\cdot2^{-n}.
\]
For $d'\ge3$, the exponent is at most $d-2n+4$. Since
$\sum_{j=3}^d 2^j<2^{d+1}$, these terms together contribute less than
\[
\frac76\,2^{2d-2n+5}\le\frac{112}{3}\,2^{-n}<38\cdot2^{-n}.
\]
Also $(1-2^{-n})(1-2^{-s})>999/1000$. Combining the estimates yields
\[
\Psi_s(2)<\frac{89}{100}\bigl(1+2^{7-n}\bigr)+\frac{1000}{999}(17+38)\,2^{-n}
<\frac{89}{100}+170\cdot2^{-n}
\le\frac{89}{100}+\frac{170}{2^{11}}<1.
\]

For $n\le10$, the pairs with $g>1$ are
\[
(6,2),\ (8,3),\ (9,2),\ (9,5),\ (10,3),\ (10,4),\ (10,5).
\]
When $n=9$, a noncoprime degree has $g\mid s$ and is excluded. All other possibilities are in the following table. Here $s_0$ is the least integer $s>h(n-h-2)$ with $g\nmid s$ and $\gcd(n,s)=d$. Substitution in~\eqref{eq:psi}, using $\omega_2(2)>9/16$ and $\omega_1(2^{d'})>(2^{d'}-2)/(2^{d'}-1)$, gives the displayed rational bounds.
\begin{center}
\begin{tabular}{@{}rrrrl@{}}
\toprule
$n$&$h$&$d$&$s_0$&upper bound for $\Psi_{s_0}(2)$\\
\midrule
6&2&2&8&$262/2295<1$\\
8&3&2&10&$709616/782595<1$\\
10&4&2&18&$149800/337041<1$\\
10&3,5&5&25&$16777681/9663676128<1$\\
\bottomrule
\end{tabular}
\end{center}
Monotonicity in $s$ finishes the proof.
\end{proof}

\subsection{Existence in every sufficiently large admissible dimension}

\begin{proof}[Proof of Theorem~\ref{thm:main}]
If $g=1$, every admissible $r$ is divisible by $h+1$, so the standard construction suffices. The cases $h=1$ and $h=n-3$ follow from the known constructions cited in the introduction. The case $h=n-2$ has $g=1$.

It remains to take $2\le h\le n-4$ and $g>1$. Write
\[
r=\frac{h+1}{g}b.
\]
Here $b$ is an integer, because $(h+1)/g$ divides $r(n/g)$ and is coprime to $n/g$. We call $b=gr/(h+1)$ the \emph{normalized dimension} of $V(r,q^n)$. If $g\mid b$, use standard blocks. Otherwise, Lemma~\ref{lem:uniform-etale}, with $s=b$, gives the required subspace as soon as $b>h(n-h-2)$.
\end{proof}

\begin{remark}\label{rem:uniform-blocks}
Assume $2\le h\le n-4$ and $g>1$. The proof can also be expressed through finitely many blocks. For each $j=1,\ldots,g-1$, choose the unique integer $\sigma_j$ in
\[
\{h(n-h-2)+1,\ldots,h(n-h-2)+g\}
\]
congruent to $j$ modulo $g$. Standard blocks have normalized dimension $g$, and Lemma~\ref{lem:uniform-etale} supplies the normalized dimensions $\sigma_j$. Their direct sums give every normalized dimension at least $h(n-h-2)+1$. This is the exact threshold for these generators: $h(n-h-2)\equiv1\pmod g$, so the preceding integer is not a multiple of $g$ and is smaller than every $\sigma_j$.
\end{remark}

\begin{corollary}\label{cor:mrd}
Let $n\ge3$, $2\le\delta\le n-1$ and $g=\gcd(n,\delta-1)$. For every prime power $q$, there exists an $\F_{q^n}$-linear MRD code with parameters
\[
\left[N,\frac{(n-\delta+1)N}{n},\delta\right]_{q^n/q}
\]
whenever
\[
\frac ng\mid N,\qquad
N\ge\frac ng\bigl((n-\delta)(\delta-2)+1\bigr).
\]
\end{corollary}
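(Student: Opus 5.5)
The plan is to translate Corollary~\ref{cor:mrd} into the language of Theorem~\ref{thm:main} through the correspondence of Section~\ref{sec:relations}, setting
\[
h=\delta-2,\qquad r=\frac{(n-\delta+1)N}{n}.
\]
Then $1\le h\le n-3$ when $\delta\ge3$, and $h+1=\delta-1$, so $g=\gcd(n,h+1)$ agrees with the $g$ of the corollary. The MRD length $rn/(h+1)$ becomes $rn/(n-\delta+1)$, which is not the right relation. So I first fix the dictionary correctly: a maximum $h$-scattered subspace of $V(r,q^n)$ gives an $[N,r,n-h]$ MRD code with $N=rn/(h+1)$. Matching the minimum distance $n-h=\delta$ forces
\[
h=n-\delta,\qquad h+1=n-\delta+1,\qquad r=\frac{(n-\delta+1)N}{n}.
\]
The dimension in the corollary is exactly this $r$, and $1\le h\le n-2$ holds precisely because $2\le\delta\le n-1$.

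The gcd in Theorem~\ref{thm:main} is then $g'=\gcd(n,n-\delta+1)=\gcd(n,\delta-1)=g$. This is why the corollary's $g$ uses $\delta-1$.

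Next I check the two hypotheses of Theorem~\ref{thm:main}. The integrality condition $(h+1)\mid rn$ is automatic, since $rn/(h+1)=N$. For $r$ itself to be an integer we need $n\mid(n-\delta+1)N$. Dividing by $g$, this is $(n/g)\mid\bigl((n-\delta+1)/g\bigr)N$, and since $n/g$ is coprime to $(n-\delta+1)/g$, it is equivalent to $(n/g)\mid N$, which is the stated divisibility. For the size bound, the threshold $r\ge\frac{h+1}{g}\bigl(h(n-h-2)+1\bigr)$ becomes, after multiplying by $n/(h+1)$,
\[
N\ge\frac ng\bigl((n-\delta)(\delta-2)+1\bigr),
\]
which is exactly the hypothesis on $N$. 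I also need $r\ge h+1$. When $\delta\le n-2$, so that $h\ge2$... in fact in all cases $r\ge\frac{h+1}{g}\bigl(h(n-h-2)+1\bigr)\ge(h+1)/g$. This falls short of $h+1$ when $h(n-h-2)+1<g$, so that case needs care.

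In the degenerate case $h(n-h-2)=0$ we have $h=n-2$, $\delta=2$, and $g=\gcd(n,1)=1$, so the bound already gives $r\ge h+1$. Otherwise $h\ge1$ and $n-h-2\ge1$, hence $h(n-h-2)+1\ge h+1\ge g$, and so $r\ge(h+1)/g\cdot g=h+1$ unless $g\nmid\ldots$. More directly, $r\ge\frac{h+1}{g}(h+1)\ge h+1$. Applying Theorem~\ref{thm:main} then gives a maximum $h$-scattered subspace, and Proposition~\ref{prop:criterion} turns it into the required $[N,r,n-h]=[N,(n-\delta+1)N/n,\delta]$ MRD code.

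I expect the main obstacle to be only the bookkeeping: getting the index change $\delta\leftrightarrow h$ right, and verifying the coprimality step that reduces $n\mid(n-\delta+1)N$ to $(n/g)\mid N$.
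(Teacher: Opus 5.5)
Your proposal is correct and follows the paper's proof: you substitute $h=n-\delta$ and $r=(n-\delta+1)N/n$ into Theorem~\ref{thm:main}, use $\gcd(n,n-\delta+1)=\gcd(n,\delta-1)$ to reduce integrality of $r$ to $(n/g)\mid N$, and obtain the MRD code from Proposition~\ref{prop:criterion}. Your explicit checks of $r\ge h+1$ and of the coprimality reduction fill in details the paper leaves implicit; in a final write-up, drop the abandoned first attempt with $h=\delta-2$ and the dangling ``unless $g\nmid\ldots$'' clause.
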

\begin{proof}
Apply Theorem~\ref{thm:main} with $h=n-\delta$ and $r=(n-\delta+1)N/n$, and then use Proposition~\ref{prop:criterion}. The identity $\gcd(n,n-\delta+1)=\gcd(n,\delta-1)$ gives the stated divisibility condition.
\end{proof}

We compare Theorem~\ref{thm:main} with the thresholds obtained from Corollary~\ref{cor:blocks} alone. For each unit residue $j$ modulo $g$, let
\[
s_j=\min\{s>h(n-h-2):\gcd(s,n)=1,\ s\equiv j\pmod g\}.
\]
The normalized dimensions supplied by Corollary~\ref{cor:blocks} and standard blocks under direct sums form the numerical semigroup
\[
\mathcal S=\langle g,\ s_j:j\in(\mathbb Z/g\mathbb Z)^*\rangle.
\]
Indeed, every integer $s>h(n-h-2)$ coprime to $n$ is some $s_j$ plus a nonnegative multiple of $g$, and the blocks of Corollary~\ref{cor:blocks} with auxiliary degree $s$ have normalized dimension a multiple of $s$. For $0\le j<g$, let $M_j$ be the least element of $\mathcal S$ congruent to $j$ modulo $g$. Since $M_j-g$ is the largest integer congruent to $j$ modulo $g$ that does not lie in $\mathcal S$, the conductor of $\mathcal S$ is
\[
c(\mathcal S)=\max_{0\le j<g}M_j-g+1.
\]
Thus the least threshold obtained from Corollary~\ref{cor:blocks} and standard blocks is $\max\{(h+1)c(\mathcal S)/g,\,h+1\}$. Table~\ref{tab:thresholds} compares it with the threshold of Theorem~\ref{thm:main}.

\begin{table}[ht]
\centering
\begin{tabular}{@{}rrrrr@{}}
\toprule
$n$&$h$&$g$&coprime threshold&\'etale threshold\\
\midrule
8&3&4&19&10\\
12&3&4&43&22\\
12&5&6&82&26\\
12&7&4&86&44\\
15&2&3&26&23\\
16&7&8&101&50\\
16&11&4&201&102\\
18&8&9&130&65\\
18&11&6&344&112\\
24&11&12&370&122\\
\bottomrule
\end{tabular}
\caption{Thresholds $r_0$ such that maximum $h$-scattered subspaces of $V(r,q^n)$ are obtained for every $q$ and every $r\ge r_0$ with $(h+1)\mid rn$: from Corollary~\ref{cor:blocks} and standard blocks (coprime threshold, computed from the conductor of $\mathcal S$), and from Theorem~\ref{thm:main} (\'etale threshold).}\label{tab:thresholds}
\end{table}

\begin{example}\label{ex:uniform}
For $(n,h)=(8,3)$, the coprime semigroup is $\langle4,11,13\rangle$, of conductor $19$. The \'etale construction also supplies $s=10$, and $\langle4,10,11,13\rangle$ contains every integer at least $10$. Hence maximum $3$-scattered subspaces exist for every $q$ in dimensions $4,8$ and every $r\ge10$.
\end{example}

\section{Existence without divisibility}\label{sec:rounded}

Put
\[
N=\left\lfloor\frac{rn}{h+1}\right\rfloor,\qquad
\epsilon=rn-(h+1)N,\qquad 0\le\epsilon\le h.
\]
For an $\F_q$-subspace $U$ of $V(r,q^n)$, we call $\epsilon(U)=rn-(h+1)\dim_{\F_q}U$ the \emph{defect} of $U$. If $r\ge h+1$ and $U$ is $h$-scattered, then $\epsilon(U)\ge0$, and $U$ attains the integer upper bound if and only if $\epsilon(U)=\epsilon$, that is, if and only if $\epsilon(U)\le h$. The defect is additive under direct sums. The subspace $\F_q$ of $V(1,q^n)=\F_{q^n}$ has dimension one and defect $n-h-1$.

\begin{proposition}\label{prop:free}
Let $r_0\ge h+1$ and let $U_0$ be an $h$-scattered subspace of $V(r_0,q^n)$ attaining the integer upper bound. Put $N_0=\dim_{\F_q}U_0$ and $\epsilon_0=r_0n-(h+1)N_0$. For every $j\ge0$, the subspace
\[
U_0\oplus\F_q^j\le\F_{q^n}^{r_0+j}
\]
is $h$-scattered and has dimension $N_0+j$. It attains the integer upper bound if and only if
\[
\epsilon_0+j(n-h-1)<h+1.
\]
\end{proposition}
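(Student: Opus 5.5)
The plan is to treat $\F_q^j\le\F_{q^n}^j$ as a direct sum of $j$ copies of the subspace $\F_q\le V(1,q^n)$, and then follow the proof of Corollary~\ref{cor:sum} through the kernel criterion of Lemma~\ref{lem:generalcriterion}. Corollary~\ref{cor:sum} itself cannot be cited, since it requires $r_i\ge h+1$ and $V(1,q^n)$ has $r=1$. We work directly with matrices instead.

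Let $G_0\in\Mat_{r_0\times N_0}(\F_{q^n})$ be an $\F_q$-basis matrix of $U_0$, and put $G=\mathrm{diag}(G_0,I_j)$. Its columns span $U_0\oplus\F_q^j$ over $\F_q$. By Lemma~\ref{lem:generalcriterion} applied to $U_0$, $G_0$ has row rank $r_0$ and its kernel has no nonzero vector of rank at most $h+1$. Then $G$ has row rank $r_0+j\ge h+1$, and $\ker G=\ker G_0\oplus 0$. A nonzero vector of $\ker G$ therefore has the form $(x,0)$ with $0\ne x\in\ker G_0$, and $\rk_q(x,0)=\rk_q(x)\ge h+2$. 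Lemma~\ref{lem:generalcriterion} then shows that $U_0\oplus\F_q^j$ is $h$-scattered of dimension $N_0+j$; this includes the spanning condition.

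For the last claim we compute the defect, which is additive:
\[
(r_0+j)n-(h+1)(N_0+j)=\epsilon_0+j(n-h-1).
\]
As noted above, since $r_0+j\ge h+1$ and the subspace is $h$-scattered, it attains the integer bound $\lfloor (r_0+j)n/(h+1)\rfloor$ exactly when its defect is at most $h$, that is, exactly when $\epsilon_0+j(n-h-1)<h+1$. The reason is that the integer bound is the largest dimension $D$ with $(r_0+j)n-(h+1)D\ge0$, so it is attained precisely when the defect lies in $[0,h]$.

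There is no real obstacle here. The only points needing care are that the trivial block with $r=1$ falls outside the hypotheses of Corollary~\ref{cor:sum}, which the kernel argument handles directly, and that the defect characterization is applied in ambient dimension $r_0+j\ge h+1$.
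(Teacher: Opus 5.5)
Your proof is correct and is essentially the paper's argument: append an identity block to an $\F_q$-basis matrix $G_0$ of $U_0$, observe that the kernel is $\ker G_0\oplus\{0\}$, apply Lemma~\ref{lem:generalcriterion}, and then decide attainment of the integer bound from the defect $\epsilon_0+j(n-h-1)$. Your remark that Corollary~\ref{cor:sum} cannot be cited, because the block $\F_q\le V(1,q^n)$ has $r=1<h+1$, is the reason the paper also argues directly with matrices.
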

\begin{proof}
If $G_0$ is an $\F_q$-basis matrix of $U_0$, append an identity block of size $j$. The resulting matrix has full row rank and independent columns over $\F_q$, and its kernel is $\ker G_0\oplus\{0\}$. Lemma~\ref{lem:generalcriterion} proves that its column space is $h$-scattered. Finally,
\[
n(r_0+j)-(h+1)(N_0+j)=\epsilon_0+j(n-h-1).
\]
This nonnegative integer is less than $h+1$ exactly when $N_0+j=\lfloor n(r_0+j)/(h+1)\rfloor$.
\end{proof}

\begin{corollary}\label{cor:residue-classes}
Put $g=\gcd(n,h+1)$ and $J=\lfloor h/(n-h-1)\rfloor$. For every prime power $q$, the integer upper bound is attained whenever
\[
r=\frac{h+1}{g}b+j,\qquad
b\ge h(n-h-2)+1,\qquad 0\le j\le J.
\]
Standard blocks also give every $r=u(h+1)+j$ with $u\ge1$ and $0\le j\le J$.
\end{corollary}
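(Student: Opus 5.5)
The plan is to combine the maximum subspaces produced in the proof of Theorem~\ref{thm:main} with Proposition~\ref{prop:free}, which appends $j$ free coordinates over $\F_q$.

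For the first family, fix $b\ge h(n-h-2)+1$ and put $r_0=\frac{h+1}{g}b$. Then $(h+1)\mid r_0n$, because $\frac{h+1}{g}\cdot\frac ng$ is an integer. We also have $r_0\ge h+1$. Indeed, if $g\mid b$ this is immediate, since $b\ge g$. Otherwise it follows from $b>h(n-h-2)$: when $h\le n-4$, the quantity $h(n-h-2)$ is at least $2h\ge g$.

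We then produce a maximum $h$-scattered subspace $U_0$ of $V(r_0,q^n)$ for every $q$, treating cases as in the proof of Theorem~\ref{thm:main}.
\begin{itemize}
\item If $g\mid b$, then $(h+1)\mid r_0$, and direct sums of standard blocks suffice (Corollary~\ref{cor:sum}).
\item If $g=1$, the same argument applies, since then $(h+1)\mid r_0$ automatically.
\item If $h\in\{1,n-3\}$, we use the cited known constructions, valid for all admissible $r_0\ge h+1$.
\item Otherwise $2\le h\le n-4$, $g>1$ and $g\nmid b$. Then Lemma~\ref{lem:uniform-etale} with $s=b$ gives $U_0$.
\end{itemize}
In every case $U_0$ is maximum, so its defect is $\epsilon_0=0$.

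Now apply Proposition~\ref{prop:free} to $U_0$ with $0\le j\le J$. The space $U_0\oplus\F_q^j\le V(r_0+j,q^n)$ attains the integer upper bound if and only if $j(n-h-1)<h+1$. This is equivalent to $j(n-h-1)\le h$, that is, $j\le\lfloor h/(n-h-1)\rfloor=J$. So every $r=r_0+j$ in the stated range works.

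For the second family, take $r_0=u(h+1)$ with $u\ge1$, and let $U_0$ be the direct sum of $u$ standard blocks $U_{\mathrm{st}}$. By Corollary~\ref{cor:sum} this is maximum $h$-scattered in $V(u(h+1),q^n)$, with defect $0$. The same application of Proposition~\ref{prop:free} then covers $0\le j\le J$.

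The only point needing care is the first family: one must check that the case distinction covers every $b\ge h(n-h-2)+1$ and that $r_0\ge h+1$ in each case, so that the hypothesis $r_0\ge h+1$ of Proposition~\ref{prop:free} holds. The degenerate case $h=n-2$ has $g=1$ and is covered by standard blocks. The rest is the one-line defect computation above.
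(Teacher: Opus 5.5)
Your proposal is correct and follows the paper's proof: take a maximum block in dimension $(h+1)b/g$, which the paper gets by citing Theorem~\ref{thm:main} directly instead of redoing its case split, then apply Proposition~\ref{prop:free} with $\epsilon_0=0$; the second family uses $u$ standard blocks. Your check of $r_0\ge h+1$ skips the case $h=n-3$ with $g\nmid b$. The paper's uniform inequality $h(n-h-2)+1\ge g$ covers it, since there $h(n-h-2)=h$ and $g\mid h+1$, so $b\ge h+1\ge g$.
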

\begin{proof}
Since $h(n-h-2)+1\ge g$, the ambient dimension $(h+1)b/g$ is at least $h+1$. Theorem~\ref{thm:main} supplies a maximum block in that dimension, and Proposition~\ref{prop:free} allows exactly the stated values of $j$. The second assertion uses a direct sum of $u$ standard blocks.
\end{proof}

\begin{proof}[Proof of Theorem~\ref{thm:mainrounded}]
Put $g=\gcd(n,h+1)$. Since $n=(h+1)+(n-h-1)$, the hypothesis gives $g=n-h-1$. Write
\[
r=\frac{h+1}{g}b+j,\qquad 0\le j<\frac{h+1}{g}.
\]
The assumed bound on $r$ implies $b\ge h(n-h-2)+1$, and $jg<h+1$. Corollary~\ref{cor:residue-classes} applies.
\end{proof}

The values $j=0,\ldots,J$ are distinct modulo $(h+1)/g$, since $g\mid n-h-1$ implies $J<(h+1)/g$. If $(h+1)/g>1$, these operations cover every eventual residue precisely when $(n-h-1)\mid(h+1)$. For the remaining residues this construction gives no information.

\begin{example}
For $(n,h)=(12,8)$, one has $g=3$ and $h(n-h-2)+1=17$. Hence every $r\ge51$ admits an $8$-scattered subspace of dimension $\lfloor4r/3\rfloor$, for every $q$. At $r=53$, a maximum block with $(r,N)=(51,68)$ and two free coordinates give dimension $70$.
\end{example}

For $h=n-2$, standard blocks and free coordinates attain the integer upper bound for every $r\ge h+1$. The same holds when $h=n-3$ and $n$ is even. Indeed, the known maximum constructions obtained by Delsarte duality \cite[Corollary~3.5]{CMPZ} give every ambient dimension at least $h+1$ divisible by $(h+1)/2$. Choose the largest such multiple not exceeding $r$, then append fewer than $(h+1)/2$ coordinates. Each has defect two.

\subsection{The direct bound at positive defect}

An auxiliary degree is compatible with $(r,N)$ if
\begin{equation}\label{eq:compatible}
s\mid\gcd(r,N),\qquad \gcd(s,n)=1.
\end{equation}
For such an $s$, write $r=s\ell$ and $N=st$. Take a random $\ell\times t$ matrix over $\F_{q^{ns}}$ and consider the image of $\F_{q^s}^t$. The proof of Theorem~\ref{thm:first} gives the following upper bound on the probability that the image fails to be $h$-scattered of dimension $N$:
\begin{equation}\label{eq:bound}
E_s=q^{-\epsilon}\beta_{n,h,s}(q)(1-q^{-(h+1)N}),
\end{equation}
where $\beta_{n,h,s}(q)$ is defined in Theorem~\ref{thm:first}. Indeed, the counting in the proof of Theorem~\ref{thm:first} applies without change, except that a fixed nonzero vector lies in the kernel with probability $q^{-nr}=q^{-(h+1)N-\epsilon}$.

If $E_s<1$, some expanded matrix has no nonzero kernel vector of rank at most $h+1$. Proposition~\ref{prop:criterion} then gives an $h$-scattered subspace of dimension $N$ spanning the prescribed ambient space.

For fixed $n,h,r,q$, expression~\eqref{eq:bound} strictly decreases with $s$. Therefore the best compatible choice is the largest divisor of $\gcd(r,N)$ coprime to $n$. It is obtained by removing from $\gcd(r,N)$ every prime-power factor whose prime divides $n$.

\begin{theorem}\label{thm:direct}
Let $s$ satisfy~\eqref{eq:compatible}. Then $E_s<1$ holds exactly in the following cases:
\begin{enumerate}
\item $\epsilon=0$ and $s>h(n-h-2)$;
\item $\epsilon>0$ and $h=n-2$;
\item $\epsilon>0$, $h=n-3$, $s+\epsilon>h(n-h-2)$, and either $s\ge2$ or $q\ge3$.
\end{enumerate}
In particular, if $h\le n-4$ and $\epsilon>0$, the displayed bound never proves existence at the integer upper bound.
\end{theorem}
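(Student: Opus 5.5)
The plan is to reduce everything to the closed form of $\beta_{n,h,s}(q)$ from the proof of Lemma~\ref{lem:beta}, and to split according to $a=\min\{h+1,n-h-1\}$. Since $1-q^{-(h+1)N}<1$, the factor is harmless whenever the remaining product is at most $1$. When the product exceeds $1$, the tiny correction $q^{-(h+1)N}$ with $N\ge n$ cannot bring $E_s$ below $1$, because the integer exponents leave a margin. Write
\[
E_s=q^{h(n-h-2)-s-\epsilon}\,\Pi\,(1-q^{-(h+1)N}),\qquad
\Pi=\frac{\prod_{j=n-a+1}^{n-1}(1-q^{-j})}{(1-q^{-s})\prod_{j=2}^{a}(1-q^{-j})}.
\]
Case (1), $\epsilon=0$, is Lemma~\ref{lem:beta} together with the strict factor $1-q^{-(h+1)N}$; the equality case $\beta=1$ also gives $E_s<1$.

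For $h=n-2$ we have $a=1$ and $E_s=q^{-\epsilon}(1-q^{-(h+1)N})/(q^s-1)$. This is $<1$ whenever $\epsilon\ge1$, because $q^{-\epsilon}\le 1/2$ and $q^s-1\ge1$. That settles case (2).

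For $h=n-3$ we have $a=2$, and $E_s$ equals
\[
q^{n-3-s-\epsilon}\,\frac{1-q^{-(n-1)}}{(1-q^{-s})(1-q^{-2})}\,(1-q^{-(h+1)N}).
\]
If $s+\epsilon\le n-3$, the power of $q$ is $\ge1$ and the fraction is $>1$ (numerator factor $\ge$ the $1-q^{-2}$ factor), so $E_s>1$. If $s+\epsilon\ge n-2$, the exponent is $\le-1$, and we need
\[
q^{-1}(1-q^{-(n-1)})<(1-q^{-s})(1-q^{-2}).
\]
For $s\ge2$ the right side is $\ge(3/4)^2>1/2\ge q^{-1}$. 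For $s=1$ and $q\ge3$, the right side is $\ge(2/3)(8/9)>1/3$. For $s=1$ and $q=2$, the right side is $3/8$, while the left side is at least $1/2\cdot(1-2^{-(n-1)})\ge 3/8$ for $n\ge3$. Including the factor $1-2^{-(h+1)N}$, the boundary subcase with the exponent exactly $-1$ needs a direct check, and it gives $E_s\ge1$. If the exponent is $\le-2$ with $q=2$, $s=1$, I would recheck. Here I am somewhat unsure whether the stated condition should exclude that subcase; I would verify the exact comparison carefully, since this is the delicate step.

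For $h\le n-4$ with $\epsilon>0$, I need $E_s\ge1$ for every admissible $s$. The key constraint is that $s\mid N$ and $s\mid r$, so $s\mid\epsilon$ via $\epsilon=rn-(h+1)N$. Hence $s\le\epsilon\le h$. Then the exponent $h(n-h-2)-s-\epsilon\ge h(n-h-2)-2h=h(n-h-4)\ge0$. By the argument of Lemma~\ref{lem:beta} with $a\ge2$, $\Pi>1$: the numerator factors dominate the $j=2,\dots,a$ factors, and $1-q^{-s}<1$. Then $E_s\ge \Pi(1-q^{-(h+1)N})$. I must show that $\Pi\,(1-q^{-(h+1)N})\ge1$, which should follow since $\Pi\ge 1/(1-q^{-s})\cdot(\text{ratio}\ge1)$ and $1/(1-q^{-s})\ge 1+q^{-s}$ far exceeds the loss $q^{-(h+1)N}$. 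The same observation $s\mid\epsilon$ also tightens case (3), where $s\le\epsilon$.

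The main obstacle is the boundary analysis in case (3) for $q=2$, $s=1$, where the inequalities are nearly tight.
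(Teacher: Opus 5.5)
Your route is the paper's. It uses the closed form of $q^{-\epsilon}\beta_{n,h,s}(q)$ from Lemma~\ref{lem:beta}. Compatibility gives $s\mid\epsilon$, hence $s\le\epsilon\le h$ when $\epsilon>0$. For $h=n-2$ it uses the formula $1/(q^s-1)$. Finally, $(h+1)N>s$ gives the margin $(1-q^{-(h+1)N})/(1-q^{-s})>1$. The paper applies this margin once, uniformly: for $a\ge2$ and $s+\epsilon\le h(n-h-2)$ it yields $E_s>1$. You should invoke it explicitly for the converse half of case (1) too. Lemma~\ref{lem:beta} alone only gives $\beta_{n,h,s}(q)>1$ there, not $E_s\ge1$.

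The genuine gap is the one you flagged: case (3) with $s=1$ and $q=2$. You leave open whether the exponent $h-s-\epsilon$ can be $\le-2$. If it could, then $E_s\le\frac{2}{3}\bigl(1-2^{-(h+2)}\bigr)<1$, and the statement would be false, because case (3) excludes $s=1$, $q=2$.

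The missing input is not $s\le\epsilon$ but $\epsilon\le h$. This holds because $N=\lfloor rn/(h+1)\rfloor$, and you already used it for $h\le n-4$. With $s=1$, the condition $s+\epsilon>h$ forces $\epsilon=h$, so the exponent is exactly $-1$. The direct check you postponed is then
\[
E_s=\frac{4}{3}\bigl(1-2^{-(h+2)}\bigr)\bigl(1-2^{-(h+1)N}\bigr)\ge\frac{7}{6}\bigl(1-2^{-8}\bigr)>1,
\]
using $h\ge1$ and $N\ge n\ge4$.

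Your estimate ``left side $\ge 3/8$'' is not enough by itself. Equality there would let the factor $1-2^{-(h+1)N}$ push $E_s$ below $1$. You need the strict margin $7/16>3/8$, which comes from $n\ge4$. The paper argues differently: $\epsilon=h$ with $n=h+3$ forces $h$ even, because $\epsilon=2r-(h+1)(N-r)$. Hence the factor is at least $5/4$ and $(h+1)N\ge15$.
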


\begin{proof}
Put $a=\min\{h+1,n-h-1\}$. By the proof of Lemma~\ref{lem:beta},
\[
q^{-\epsilon}\beta_{n,h,s}(q)
=q^{h(n-h-2)-s-\epsilon}
\frac{\prod_{j=n-a+1}^{n-1}(1-q^{-j})}{(1-q^{-s})\prod_{j=2}^{a}(1-q^{-j})}.
\]
For $a\ge2$, the quotient of the two products over $j$ is larger than $1$, as shown in that proof. Consequently, if $s+\epsilon\le h(n-h-2)$, then
\[
q^{-\epsilon}\beta_{n,h,s}(q)>(1-q^{-s})^{-1}.
\]
Since $(h+1)N>s$, the remaining factor $1-q^{-(h+1)N}$ does not bring $E_s$ below one.

For $\epsilon=0$, the converse follows from Lemma~\ref{lem:beta}, including its equality case because of the strict factor $1-q^{-(h+1)N}$ in~\eqref{eq:bound}. This proves the first case.

Suppose next that $\epsilon>0$. Compatibility implies $s\mid\epsilon$, so $s\le\epsilon\le h$. If $h\le n-4$, then
\[
s+\epsilon\le2h\le h(n-h-2),
\]
and the bound fails. If $h=n-2$, the formula $\beta_{n,h,s}(q)=1/(q^s-1)$ proves the assertion immediately.

For $h=n-3$, one has $h(n-h-2)=h$ and
\[
q^{-\epsilon}\beta_{n,h,s}(q)
=q^{h-s-\epsilon}
\frac{1-q^{-(h+2)}}{(1-q^{-2})(1-q^{-s})}.
\]
If $s+\epsilon>h(n-h-2)$ and $s\ge2$, the right-hand side is less than $8/9$. If $s=1$, the strict inequality forces $\epsilon=h$. For $q\ge3$ the right-hand side is less than $9/16$. For $q=2$, feasibility forces $h$ even and $h\ge2$; the expression equals $4(1-2^{-(h+2)})/3\ge5/4$, while $(h+1)N\ge15$, so $E_s>1$. This proves all cases.
\end{proof}

The bound~\eqref{eq:bound} is not sharp: a vector of rank at most $h$ is counted once for each $D$ containing its coordinates, a point of $\mathcal O$ may contain several $H$-orbits of vectors of expanded rank at most $h+1$, and the union bound is not exact. Theorem~\ref{thm:direct} only determines when this bound is less than one.

\subsection{Additional constructions when \texorpdfstring{$h=n-3$}{h = n-3}}

It remains to classify the positive-defect auxiliary blocks for $n=h+3$. Write $c=r\bmod (h+1)$, with $0\le c< h+1$. If $c<(h+1)/2$, standard blocks and $c$ free coordinates already attain the integer upper bound. If $c>(h+1)/2$, put
\[
\epsilon_c=2c-(h+1)>0.
\]
The middle class $c=(h+1)/2$, when it exists, has defect zero and is covered by the known maximum constructions.

\begin{proposition}\label{prop:m2}
For each $c>(h+1)/2$, consider
\[
\mathcal D_c=\{s\in\mathbb Z:\ s\mid\epsilon_c,\ s\ge3\text{ odd},\ \gcd(s,n)=1,\ s+\epsilon_c\ge h+1\}.
\]
If this set is nonempty, take its smallest element $s_c$ and define
\begin{equation}\label{eq:positiveatoms}
r_c=\frac{(h+1) s_c+\epsilon_c}{2},\qquad
N_c=\frac{n s_c+\epsilon_c}{2}.
\end{equation}
For every $q$, the bound~\eqref{eq:bound} gives an $h$-scattered subspace of dimension $N_c$ in $V(r_c,q^n)$. The parameters of every other positive-defect construction supplied by that bound are obtained from these blocks, standard blocks and free coordinates.
\end{proposition}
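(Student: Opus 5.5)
The plan is to check that $(r_c,N_c)$ satisfies case~(3) of Theorem~\ref{thm:direct} with auxiliary degree $s_c$. Then I will show that every other positive-defect pair produced by~\eqref{eq:bound} reduces to some $(r_c,N_c)$ plus standard blocks, or to standard blocks plus free coordinates. Throughout, $n=h+3$, so $h(n-h-2)=h$ and $n-h-1=2$.

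\emph{Step 1: arithmetic of $(r_c,N_c)$.} Since $\epsilon_c=2c-(h+1)\equiv h+1\equiv n\pmod 2$ and $s_c$ is odd, both $(h+1)s_c+\epsilon_c$ and $ns_c+\epsilon_c$ are even, so $r_c,N_c$ are integers. Because $s_c\mid\epsilon_c$ and $s_c$ is odd, $s_c$ divides $2r_c$ and $2N_c$, hence divides $r_c$ and $N_c$. Together with $\gcd(s_c,n)=1$, this gives~\eqref{eq:compatible}.

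A direct computation gives
\[
r_cn-(h+1)N_c=\frac{\epsilon_c(n-h-1)}{2}=\epsilon_c,
\]
and $0<\epsilon_c\le h$ because $c\le h$. Hence $N_c=\lfloor r_cn/(h+1)\rfloor$ and the defect is exactly $\epsilon_c$. Also $r_c\ge(3(h+1)+1)/2>h+1$, and
\[
r_c=(h+1)\frac{s_c-1}{2}+c,
\]
so $r_c\equiv c\pmod{h+1}$. The membership conditions $s_c\ge3$ and $s_c+\epsilon_c\ge h+1>h$ are exactly the hypotheses of case~(3) of Theorem~\ref{thm:direct}, with $s\ge2$ so that no restriction on $q$ arises. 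Therefore $E_{s_c}<1$ for every $q$, and by the discussion after~\eqref{eq:bound} we obtain an $h$-scattered subspace of dimension $N_c$ in $V(r_c,q^n)$.

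\emph{Step 2: reduction of any other positive-defect construction.} Let $(r,N)$ with $\epsilon>0$ come from~\eqref{eq:bound} via a compatible $s$, so we are in case~(3) of Theorem~\ref{thm:direct}. Write $r=(h+1)m+c$ with $0\le c<h+1$; since $rn=r(h+1)+2r$, we have $\epsilon\equiv2c\pmod{h+1}$.

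If $c<(h+1)/2$, then $\epsilon=2c$, and standard blocks plus $c$ free coordinates already give the same parameters by Proposition~\ref{prop:free}. This also covers the case $s=1$, $\epsilon=h$: there $2c\equiv h\pmod{h+1}$, so $2c=h$ or $2c=2h+1$, and parity rules out the second.

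If $c>(h+1)/2$, then $\epsilon=\epsilon_c$ and $s\ge2$. First I show $s$ is odd. If $s$ were even, $\gcd(s,n)=1$ would make $n$ odd, so $h+1$ would be odd and $\epsilon_c$ odd, contradicting $s\mid\epsilon$. Hence $s$ is odd, $s\ge3$, $s\mid\epsilon_c$ and $s+\epsilon_c>h$, so $s\in\mathcal D_c$ and $s\ge s_c$.

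Next I compare $r$ with $r_c$, using
\[
2r=(h+1)(2m+1)+\epsilon_c.
\]
Since $s$ is odd, coprime to $h+3$, and any common divisor of $s$ and $h+1$ divides $2$, we get $\gcd(s,h+1)=1$. From $s\mid r$ and $s\mid\epsilon_c$ it follows that $s\mid 2m+1$. Hence $2m+1\ge s\ge s_c$, so $r\ge r_c$ and
\[
r-r_c=(h+1)\Bigl(m-\frac{s_c-1}{2}\Bigr).
\]
So $(r,N)$ equals $(r_c,N_c)$ plus this many standard blocks; the $N$-values agree because both pairs have the same defect $\epsilon_c$. Corollary~\ref{cor:sum} then realizes these parameters as a direct sum.

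The main obstacle is the bookkeeping in Step~2. One has to verify that evenness of $s$ and the $q=2$, $s=1$ exceptions never produce genuinely new parameters. The key inequality $r\ge r_c$ rests on the coprimality argument that forces $s\mid 2m+1$.
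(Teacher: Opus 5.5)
Your Step~1 and the case split in Step~2 are sound and follow the paper's outline. You also make explicit why every positive-defect construction with $c>(h+1)/2$ has $s\in\mathcal D_c$, which the paper leaves implicit.

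The gap is the step you yourself identify as key: the claim $\gcd(s,h+1)=1$. Your justification, that any common divisor of $s$ and $h+1$ divides $2$, is false. Coprimality of $s$ with $h+3$ says nothing about $h+1$; only $\gcd(h+1,h+3)\mid 2$ holds. A concrete case: $h=14$, $n=17$, $c=12$, so $\epsilon_c=9$. Then $\mathcal D_c=\{9\}$, $s_c=9$, $r_c=72$, and $\gcd(9,15)=3$.

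Moreover, the inference ``$s\mid r$ and $s\mid\epsilon_c$ imply $s\mid 2m+1$'' is itself false there. Take $r=27=15\cdot 1+12$. Then $9\mid r$ and $9\mid\epsilon_c$, yet $2m+1=3$ and $r<r_c$. From $2r=(h+1)(2m+1)+\epsilon_c$ you only get $s\mid (h+1)(2m+1)$. So, as written, your argument would ``prove'' $r\ge r_c$ for a value where it fails. The inequality is rescued only by the part of compatibility you never use, namely $s\mid N$: here $N=30$ and $9\nmid 30$.

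The repair is one line. Since $h+1<2c<2(h+1)$, you have $\lfloor 2r/(h+1)\rfloor=2m+1$. Hence
\[
N=\Bigl\lfloor\frac{rn}{h+1}\Bigr\rfloor=r+\Bigl\lfloor\frac{2r}{h+1}\Bigr\rfloor=r+2m+1 .
\]
Therefore $s\mid N-r=2m+1$ directly from $s\mid r$ and $s\mid N$. This is exactly the paper's route: it sets $m=N-r$ (your $2m+1$), gets $s\mid m$ from compatibility, and deduces that $m$ is odd from $r\equiv c\pmod{h+1}$. With this replacement, the rest of your Step~2 goes through unchanged:
\begin{itemize}
\item $2m+1\ge s\ge s_c$,
\item $r-r_c=(h+1)\bigl(m-\tfrac{s_c-1}{2}\bigr)\ge 0$,
\item equal defects give matching $N$.
\end{itemize}
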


\begin{proof}
The oddness condition is automatic: if $h+1$ is odd then $\epsilon_c$ is odd, while if $h+1$ is even then $n$ is even and $s$ is coprime to $n$. Thus~\eqref{eq:positiveatoms} is integral. Dividing by $s=s_c$ gives
\[
\ell=\frac{h+1+\epsilon_c/s}{2},\qquad
t=\frac{n+\epsilon_c/s}{2}=\ell+1.
\]
Theorem~\ref{thm:direct} applies.

Now fix $s\in\mathcal D_c$ and let $r\equiv c\pmod{h+1}$ be compatible with $s$, where $N=\lfloor rn/(h+1)\rfloor$. Put $m=N-r$. Since $n=h+3$, we have $\epsilon=rn-(h+1)N=2r-(h+1)m$, and $r\equiv c\pmod{h+1}$ gives $\epsilon=\epsilon_c$. Hence $2r=m(h+1)+\epsilon_c$, and $s\mid m$ because $s$ divides $r$ and $N$. As $\epsilon_c=2c-(h+1)$, this gives $2(r-c)=(h+1)(m-1)$, and $r\equiv c\pmod{h+1}$ shows that $m$ is odd. Hence $m$ is an odd positive multiple of $s$, and
\[
r=\frac{(h+1)s+\epsilon_c}{2}+(h+1)s\,\frac{m/s-1}{2},
\qquad
\frac{(h+1)s+\epsilon_c}{2}-r_c=(h+1)\frac{s-s_c}{2},
\]
where $(m/s-1)/2$ and $(s-s_c)/2$ are nonnegative integers. Thus $(r,N)$ is obtained from $(r_c,N_c)$ by adding standard blocks.

For $c<(h+1)/2$ and $r=u(h+1)+c\ge h+1$, one has $u\ge1$, and $u$ standard blocks together with $c$ free coordinates have defect $2c<h+1$. Thus these lower classes need no auxiliary block. The only $q$-dependent direct constructions in Theorem~\ref{thm:direct} have $s=1$, $\epsilon=h$, and even $h$. They belong to $c=h/2$, a lower residue already covered for every $q$.
\end{proof}

\begin{example}
For $(n,h)=(7,4)$, take $c=4$, $\epsilon_c=3$, and $s_c=3$. This gives $(r_c,N_c)=(9,12)$ and therefore
\[
(r,N)=(9+5k,12+7k),\qquad k\ge0.
\]
For $(n,h)=(13,10)$, the upper classes are $c=6,\ldots,10$, and $\mathcal D_c\ne\varnothing$ only for $c=9,10$; the corresponding blocks $(r,N)$ are $(42,49)$ and $(21,24)$. In the second case $\epsilon_c=9$ but $s_c=3$, so taking $s=\epsilon_c$ would give a worse threshold.
\end{example}

\begin{corollary}\label{cor:oddm2}
Consider maximum blocks, the positive-defect blocks above, free coordinates and their direct sums. If $n=h+3$ is odd, these constructions attain the integer upper bound for all sufficiently large $r$ in a residue class $c$ modulo $h+1$ if and only if $c$ is one of
\[
0,1,\ldots,\frac{h}{2}
\quad\text{and}\quad
\left\lceil\frac{3(h+1)}{4}\right\rceil,\ldots,h.
\]
For $h=2$ the second interval is empty. If $n$ is even, they attain it for every $r\ge h+1$.
\end{corollary}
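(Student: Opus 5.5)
The plan is to reduce the question to bookkeeping with two additive invariants: the residue of $r$ modulo $h+1$ and the defect $\epsilon(U)$. Both are additive under direct sums (Corollary~\ref{cor:sum}).

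\textbf{The even case.} For $n$ even, the statement was already observed after the Example following Corollary~\ref{cor:residue-classes}. There one takes Delsarte-dual maximum blocks in every ambient dimension divisible by $(h+1)/2$ and appends fewer than $(h+1)/2$ free coordinates, each of defect two. So I only need to treat $n=h+3$ odd, where $h$ is even, $h+1$ is odd and $g=\gcd(h+3,h+1)=1$.

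\textbf{The building blocks.} Since $g=1$, a maximum block has $(h+1)\mid r$, so it has residue $0$ and defect $0$. A free coordinate has residue $1$ and defect $2$. A positive-defect block $(r_{c'},N_{c'})$ from Proposition~\ref{prop:m2} satisfies $2(r_{c'}-c')=(h+1)(s_{c'}-1)$ with $s_{c'}$ odd, so it has residue $c'$ and defect $\epsilon_{c'}=2c'-(h+1)$. In every case the defect is congruent to twice the residue modulo $h+1$. A direct sum of residue $c$ attains the integer bound exactly when its total defect is at most $h$. Since $h+1$ is odd, this forces the total defect to equal $2c$ if $c\le h/2$ and $\epsilon_c$ if $c>h/2$.

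\textbf{Reduction to a single auxiliary block.} For $c\le h/2$, standard blocks plus $c$ free coordinates suffice. For $c>h/2$, free coordinates alone would need $j\equiv c$ with $2j\le h$, which is impossible. So any successful sum contains at least one positive-defect block of some class $c'$, and since defects are nonnegative, $\epsilon_{c'}\le\epsilon_c$, that is, $c'\le c$. Conversely, one such block plus $c-c'$ free coordinates plus standard blocks has residue $c$ and defect $\epsilon_{c'}+2(c-c')=\epsilon_c$, covering all large $r$ in class $c$. Hence the good upper classes are exactly the $c$ with $c\ge c'$ for some $c'$ such that $\mathcal D_{c'}\ne\varnothing$. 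This set is upward closed, so it remains to find the least such $c'$.

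\textbf{Locating the threshold.} For necessity: if $s\in\mathcal D_{c'}$, then $s\mid\epsilon_{c'}$ gives $s\le\epsilon_{c'}$, and $s+\epsilon_{c'}\ge h+1$ then gives $2\epsilon_{c'}\ge h+1$, which is equivalent to $c'\ge 3(h+1)/4$. For sufficiency, the main obstacle is the coprimality condition $\gcd(s,n)=1$. I would take $c_0=\lceil 3(h+1)/4\rceil$ and $s=\epsilon_{c_0}$.
\begin{itemize}
\item $\epsilon_{c_0}$ is the least odd integer that is at least $(h+1)/2$, so $2\epsilon_{c_0}\in\{h+2,h+4\}$, namely the one congruent to $2$ modulo $4$.
\item Then $n-2\epsilon_{c_0}=\pm1$, which forces $\gcd(\epsilon_{c_0},n)=1$.
\item $\epsilon_{c_0}$ is odd, and $\epsilon_{c_0}\ge3$ as soon as $h\ge4$.
\end{itemize}
So $s=\epsilon_{c_0}\in\mathcal D_{c_0}$. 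For $h=2$ every $\epsilon_c\le h-1=1<3$, so all $\mathcal D_c$ are empty and the second interval is empty, matching $\lceil 9/4\rceil=3>h$. Combining the two directions gives exactly the stated residue classes.
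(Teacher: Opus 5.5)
Your proof is correct and is essentially the paper's argument. The paper phrases it through the smallest odd defect $\epsilon_*>h/2$ (your $\epsilon_{c_0}=(n\pm1)/2$) and uses parity of defects where you use the congruence between defect and twice the residue. The translation $c=(h+1+\epsilon)/2$ makes the two bookkeepings identical, and the even case uses the same Delsarte-dual blocks plus free coordinates.
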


\begin{proof}
Suppose first that $n$ is odd, so $h$ is even. Upper residues have odd defect. By $s\le\epsilon$ and $s+\epsilon>h$, every such auxiliary block has $\epsilon>h/2$. Let $\epsilon_*$ be the smallest odd integer satisfying this inequality. For $h\ge4$, it is at most $h-1$ and is coprime to $n$: according to $n\bmod4$, it equals $(n-1)/2$ or $(n+1)/2$. Thus $\epsilon_*\in\mathcal D_c$ for $c=(h+1+\epsilon_*)/2$. Each free coordinate increases the defect by two, so appending free coordinates produces every odd defect from $\epsilon_*$ through $h-1$. No sum can produce a smaller odd defect, since all free defects are even and every odd auxiliary defect is at least $\epsilon_*$. Translating back by $c=(h+1+\epsilon)/2$ gives the assertion. The case $h=2$ is immediate. If $n$ is even, then $h+1=n-2$ is even, so the known maximum constructions and free coordinates cover every ambient dimension at least $h+1$.
\end{proof}

\section*{Acknowledgments}
This research has been partially supported by the Italian National Group for Algebraic and Geometric Structures and their Applications (GNSAGA -- INdAM). A. Neri is supported by the University of Naples Federico II through the FRA2024 Project CUP E63C26001530001  ``NHACT -- New Horizons in Algebraic Coding Theory''.

\section*{Declaration on the use of AI tools}
The authors used Claude (Anthropic, Opus 5.5) to improve the exposition, check mathematical arguments, and assist in the development of proofs based on ideas provided by the authors. All results and proofs were independently verified by the authors, who assume full responsibility for the content of the manuscript.

\bibliographystyle{abbrv}
\bibliography{references}
\end{document}